\newtheorem{thm}{Theorem}[section]
\newtheorem{lem}[thm]{Lemma}
\renewcommand{\theenumi}{\roman{enumi}}
\theoremstyle{definition}
\newtheorem{defn}[thm]{Definition}
\theoremstyle{remark}
\journal{Statistics And Probability Letters}
\begin{document}

\begin{frontmatter}

\title{Ruin probabilities under Sarmanov dependence structure}

\author{Krishanu Maulik}
\address{Krishanu Maulik, \ Theoretical Statistics and Mathematics Unit, \ Indian Statistical Institute, \ 203, Barrackpore Trunk Road, Kolkata 700108, India.}
\ead{krishanu@isical.ac.in.}
\author{Moumanti Podder}
\address{Moumanti Podder, \ Courant Institute of Mathematical Sciences, \ New York University, \ 251 Mercer Street, New York, NY 10012, United States.}
\ead{mp3460@nyu.edu.}

\begin{abstract}
Our work aims to study the tail behaviour of weighted sums of the form $\sum_{i=1}^{\infty} X_{i} \prod_{j=1}^{i}Y_{j}$, where $(X_{i}, Y_{i})$ are independent and identically distributed, with common joint distribution bivariate Sarmanov. Such quantities naturally arise in financial risk models. Each $X_{i}$ has a regularly varying tail. With sufficient conditions similar to those used by \cite{denisov:zwart:2007} imposed on these two sequences, and with certain suitably summable bounds similar to those proposed by \cite{hazra:maulik:2012}, we explore the tail distribution of the random variable $\sup_{n \geq 1}\sum_{i=1}^{n} X_i \prod_{j=1}^{i}Y_{j}$. The sufficient conditions used will relax the moment conditions on the $\{Y_{i}\}$ sequence.
\end{abstract}

\begin{keyword}
Regular variation \sep product of random variables \sep ruin probabilities \sep Sarmanov distribution.

\end{keyword}

\end{frontmatter}

\section{Introduction} \label{sec:intro}
Regularly varying distributions find several applications in areas of actuarial and financial mathematics, in the analysis of random coefficient linear processes such as ARMA and FARIMA, and in stochastic difference equations. We refer to \cite{nyrhinen:2012} for the study of the insurance ruin problem. The development of the capital is described as the solution to a stochastic difference equation. The net losses over the years are independent and identically distributed with regularly varying tail. \cite{yang:wang:2013} consider a discrete-time risk model with dependent insurance and financial risks. If $X_{n}$ denotes the insurance risk and $Y_{n}$ the financial risk or the stochastic discount factor in time $n$, then
\begin{equation} \label{discount time n}
S_{n} = \sum_{i=1}^{n} X_{i} \prod_{j=1}^{i} Y_{j}
\end{equation}
represents the stochastic discount value of aggregate net losses up to time $n$. In \cite{yang:wang:2013}, the finite and infinite time ruin probabilities are analyzed.

A random variable $X$ with tail distribution $\overline{F}$ is said to be regularly varying with index $-\alpha$, with $\alpha > 0$, if $\overline{F}(xy) \sim y^{-\alpha} \overline{F}(x)$ as $x \rightarrow \infty$, for all $y > 0$. This is denoted by $X \in RV_{-\alpha}$. Let $\{X_{n}, n \geq 1\}$ be a sequence of independent and identically distributed random variables with regularly varying tails, and $\{\Theta_{n}, n \geq 1\}$ be another sequence of random variables, not necessarily independent of $\{X_{n}\}$. The almost sure convergence and tail behaviour of $\sup_{n \geq 1} \sum_{i=1}^{n} \Theta_{i} X_{i}$ has been studied in the literature. Here and later, for two positive functions $a(x)$ and $b(x)$, we write $a(x) \sim b(x)$ as $x \rightarrow \infty$ if $\lim_{x \rightarrow \infty} a(x)/b(x) = 1$.

The study of the almost sure finiteness of the infinite sum $S_{\infty} = \sum_{i=1}^{\infty} X_{i} \prod_{j=1}^{i} Y_{j}$ has been a topic of sustained interest in the literature. The general problem has been addressed in \cite{hult:samorodnitsky:2008} for the case when the sequences $\left\{X_{i}\right\}$ and $\left\{Y_{i}\right\}$ are independent and $\{X_{i}\}$ an i.i.d.\ regularly varying sequence. See also \cite{fougeres:mercadier:2012} and \cite{yang:hashorva:2013}.

We address our problem in two parts: first we analyze the behaviour of the product, and then the sum. The main result in this direction is given in \cite{breiman:1965}, which proves that if $X \in RV_{-\alpha}$ and $\Theta$ independent of $X$ satisfies $E[\Theta^{\alpha + \varepsilon}] < \infty$ for some $\varepsilon > 0$, then $\Theta X \in RV_{-\alpha}$ with $P[\Theta X > x] \sim E[\Theta^{\alpha}]P[X > x]$ as $x \rightarrow \infty$. This result was extended to finite and infinite sums in \cite{resnick:willekens:1991}. They showed that if $\{X_{i}\}$ and $\{\Theta_{i}\}$ are independent of each other, the $X_{i}$s are i.i.d $RV_{-\alpha}$, and the $\Theta_{i}$s satisfy some extra moment assumptions, then $P\left[\sum_{i=1}^{\infty} \Theta_{i} X_{i} > x \right] \sim P[X_{1} > x] \sum_{i=1}^{\infty} E[\Theta_{i}^{\alpha}] \quad \text{as } x \rightarrow \infty$.

\cite{denisov:zwart:2007} replaced the extra moment assumptions with other sufficient conditions so that $P[\Theta X > x] \sim E[\Theta^{\alpha}]P[X > x]$ as $x \rightarrow \infty$. This was again extended to the finite and infinite sum case by \cite{hazra:maulik:2012}. Motivated by the ruin model of \cite{nyrhinen:2012} above, we restrict ourselves to the setup where $\Theta_{i} = \prod_{j=1}^{i} Y_{j}$, for i.i.d. $Y_{j}$.

We consider the finite time ruin probability by time $n$, given by
\begin{equation} \label{finite time ruin}
\Psi(x, n) = P\left[\max_{1 \leq k \leq n} S_{k} > x \right],
\end{equation}
and the infinite time ruin probability by
\begin{equation} \label{infinite time ruin}
\Psi(x) = P\left[\sup_{n \geq 1}S_{n} > x \right].
\end{equation}

\subsection{Some useful classes of distributions}
While classically, the insurance risk $\{X_{n}\}$ and discount factor $\{Y_{n}\}$ are assumed to be independent, \cite{yang:wang:2013} assumed that each $(X_{i}, Y_{i})$ follows a bivariate Sarmanov distribution, which is defined as follows.
\begin{defn} \label{sarmanov definition}
The pair of random variables $(X,Y)$ is said to follow a bivariate Sarmanov distribution, if
$$P(X \in dx, Y \in dy) = (1+\theta \phi_{1}(x)\phi_{2}(y))F(dx)G(dy), \quad x \in \mathbb{R}, y \geq 0,$$
where the kernels $\phi_{1}$ and $\phi_{2}$ are two real valued functions and the parameter $\theta$ is a real constant satisfying
$$E\{\phi_{1}(X)\} = E\{\phi_{2}(Y)\} = 0 \qquad \text{and} \qquad 1+\theta \phi_{1}(x)\phi_{2}(y) \geq 0, \quad x \in D_{X}, y \in D_{Y},$$ where $D_{X} \subset \mathbb{R}$ and $D_{Y} \subset \mathbb{R}^{+}$ are the supports of $X$ and $Y$, with marginals $F$ and $G$ respectively.
\end{defn}

This class of bivariate distributions is quite wide, covering a large number of well-known copulas such as the Farlie-Gumbel-Morgenstern (FGM) copula, which is recovered by taking $\phi_{1}(x) = 1 - 2F(x)$ and $\phi_{2}(y) = 1 - 2G(y)$. We refer the reader to  \cite{lee:1996} for further discussion. A bivariate Sarmanov distribution is called proper if $\theta \neq 0$ and none of $\phi_{1}$ and $\phi_{2}$ vanishes identically. To study the dependence structure of Sarmanov distribution, we need to define the class of dominatedly tail varying distributions.
\begin{defn}
A random variable $X$ with distribution function $F$ is called dominatedly-tail-varying, denoted by $X \in \mathcal{D}$, if for all $0 < y < 1$,
$\limsup_{x \rightarrow \infty}{\overline{F}(xy)}/{\overline{F}(x)} < \infty$.
\end{defn}

It is traditional to study the tail of the product of random variables in terms of the Breiman's condition, which we strive to weaken. For that we need to state  definitions of certain useful classes of distributions.
\begin{defn}
A random variable $X$ is said to be long tailed and denoted by $X \in \mathcal{L}$ if $P[X > x] \sim P[X > x + y]$ as $x \rightarrow \infty$, for any $y$.
\end{defn}

\begin{defn}
A non-negative function $f$ is in the class $\mathcal{S}_{d}$ and called a subexponential density if $$\lim_{x \rightarrow \infty} \int_{0}^{x} \frac{f(x - y)}{f(x)} f(y) dy = 2 \int_{0}^{\infty} f(u) du < \infty.$$ If $f \in \mathcal{S}_{d}$ is such that $f(x) = P[U > x]$ for some random variable $U$, we say that $U \in \mathcal{S}^{*}$.
\end{defn}

\begin{defn}
A non-negative random variable $T$ is in class $\mathcal{S}(\gamma), \gamma \geq 0$, if as $x \rightarrow \infty$, we have
$$\frac{P[T > x + y]}{P[T > x]} \rightarrow e^{-\gamma y} \quad \text{and} \quad \frac{P[T + T' > x]}{P[T > x]} \rightarrow 2 E[e^{\gamma T}] < \infty,$$ where $T'$ is an i.i.d. copy of $T$. For $\gamma = 0$, we get the class $\mathcal{S}$ of subexponential distributions.
\end{defn}

The crucial property used by \cite{yang:wang:2013} is that the bivariate Sarmanov dependence is not very strong. For this, they further assumed that the generic bivariate Sarmanov random vector $(X, Y)$ satisfies
\begin{equation} \label{limit of phi}
X \in RV_{-\alpha} \quad \text{and} \quad \lim_{x \rightarrow \infty} \phi_{1}(x) = d_{1}.
\end{equation}
These assumptions will also be made throughout this paper.
If $(X, Y)$ is bivariate Sarmanov, then asymptotically, the product $XY$ has the same tail distribution as the product $X Y^{*}_{\theta}$ where $X$ and $Y^{*}_{\theta}$ are independent and $Y^{*}_{\theta}$ is obtained through a change of measure. It has the distribution function $G_{\theta}$ with \begin{equation} \label{twisted version}
G_{\theta}(dy) = P[Y^{*}_{\theta} \in dy] = (1 + \theta d_{1} \phi_{2}(y))G(dy).
\end{equation}

This is formalized in Lemma~3.1 of \cite{yang:wang:2013}, but we need a less generalized version given in Theorem \ref{almost:independent}.
\begin{thm} \label{almost:independent}
Assume that $(X,Y)$ follows a bivariate Sarmanov distribution and \eqref{limit of phi} holds. Let $X^{*}$ and $Y^{*}$ be two independent random variables identically distributed as $X$ and $Y$ respectively, i.e. having marginals $F$ and $G$ respectively. Let $\overline{H^{*}}(x) = P[X^{*}Y^{*} > x]$. If now $H^{*} \in \mathcal{D}$ and $\overline{G}(x) = o(\overline{H^{*}}(x)),$ then
$P[X Y > x] \sim P[X^{*}Y^{*}_{\theta} > x],$
where $X^{*}, Y_{\theta}^{*}$ mutually independent and $Y_{\theta}^{*}$ has distribution $G_{\theta}$ as defined in \eqref{twisted version}.
\end{thm}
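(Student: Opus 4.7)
The plan is to compute $P[XY > x]$ directly from the Sarmanov joint distribution and $P[X^*Y^*_\theta > x]$ from the twisted marginal $G_\theta$, and then to show the two expressions differ by $o(\overline{H^*}(x))$. Integrating against the joint measure in Definition~\ref{sarmanov definition} gives
\begin{equation*}
P[XY > x] = \overline{H^*}(x) + \theta\, E\!\left[\phi_1(X^*)\phi_2(Y^*)\mathbf{1}\{X^*Y^* > x\}\right],
\end{equation*}
while \eqref{twisted version} together with the independence of $X^*$ and $Y^*_\theta$ yields
\begin{equation*}
P[X^*Y^*_\theta > x] = \overline{H^*}(x) + \theta d_1\, E\!\left[\phi_2(Y^*)\mathbf{1}\{X^*Y^* > x\}\right].
\end{equation*}
Subtracting reduces the problem to showing
\begin{equation*}
\Delta(x) := \theta\, E\!\left[\bigl(\phi_1(X^*) - d_1\bigr)\phi_2(Y^*)\mathbf{1}\{X^*Y^* > x\}\right] = o(\overline{H^*}(x)).
\end{equation*}

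I would control $\Delta(x)$ by the usual split in this circle of ideas. Fix $\varepsilon > 0$ and, using $\phi_1(u) \to d_1$, choose $T$ so large that $|\phi_1(u) - d_1| \leq \varepsilon$ whenever $u > T$. Then decompose $\Delta(x) = \Delta_1(x) + \Delta_2(x)$ according to $\{X^* > T\}$ versus $\{X^* \leq T\}$. On the tail piece $\{X^* > T\}$, using the boundedness of $\phi_2$---which follows from the Sarmanov positivity constraint $1 + \theta\phi_1\phi_2 \geq 0$ combined with the fact that a nondegenerate $\phi_1$ with $E\phi_1(X) = 0$ must take values of both signs---one obtains $|\Delta_1(x)| \leq |\theta|\varepsilon\|\phi_2\|_\infty \overline{H^*}(x)$. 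On the bounded piece $\{X^* \leq T\}$, the event $\{X^*Y^* > x\}$ forces $Y^* > x/T$, so by independence of $X^*$ and $Y^*$ we get $|\Delta_2(x)| \leq C(T)\, \overline{G}(x/T)$ for a constant $C(T)$ depending only on $T$, $\|\phi_1\|_\infty$, $\|\phi_2\|_\infty$ and $|\theta|$.

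To dispose of this second piece, I would invoke the dominated tail variation of $H^*$: for each fixed $T$ there is a constant $K(T)$ with $\overline{H^*}(x/T) \leq K(T)\,\overline{H^*}(x)$ for all sufficiently large $x$, so the hypothesis $\overline{G}(x) = o(\overline{H^*}(x))$ upgrades to $\overline{G}(x/T) = o(\overline{H^*}(x))$ as well. Combining the two bounds yields $\limsup_{x \to \infty} |\Delta(x)|/\overline{H^*}(x) \leq |\theta|\varepsilon\|\phi_2\|_\infty$, and letting $\varepsilon \downarrow 0$ gives $\Delta(x) = o(\overline{H^*}(x))$. Finally, since $|E[\phi_2(Y^*)\mathbf{1}\{X^*Y^* > x\}]| \leq \|\phi_2\|_\infty \overline{H^*}(x)$ the ratio $P[X^*Y^*_\theta > x]/\overline{H^*}(x)$ is bounded above, and the $o$-estimate on $\Delta(x)$ upgrades to the stated $\sim$ provided it is also bounded away from $0$; this last point hinges on the regular variation of $X$ and the nondegeneracy of the twist $G_\theta$ (so that $G_\theta$ is not asymptotically concentrated where $\overline{F}(x/y)$ is negligible). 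I expect this positivity check, together with the interplay between the $\mathcal{D}$-class bound on $\overline{H^*}$ and the dominance $\overline{G} = o(\overline{H^*})$ used to handle the small-$X^*$ region, to be the main technical obstacle.
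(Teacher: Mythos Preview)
The paper does not prove this statement itself; it records it as a special case of Lemma~3.1 of \cite{yang:wang:2013}. Your argument---expanding both $P[XY>x]$ and $P[X^*Y^*_\theta>x]$ from the Sarmanov density, subtracting, and controlling the difference $\Delta(x)$ via the split $\{X^*>T\}$ versus $\{X^*\le T\}$---is exactly the standard route and is essentially how Yang and Wang proceed. The use of $H^*\in\mathcal{D}$ to convert $\overline{G}(x/T)=o(\overline{H^*}(x/T))$ into $o(\overline{H^*}(x))$ is precisely where that hypothesis is needed, and the boundedness of $\phi_1,\phi_2$ you invoke is supplied in the paper by Lemma~\ref{kernel:bounded}.

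The point you single out as the main obstacle---that $\Delta(x)=o(\overline{H^*}(x))$ upgrades to the asserted equivalence only if $\liminf_{x\to\infty}P[X^*Y^*_\theta>x]/\overline{H^*}(x)>0$---is a genuine subtlety that the paper does not discuss. In every use the paper makes of the theorem, though, it is harmless: under any of the (DZ) conditions, Theorem~\ref{denisov:zwart} together with Lemma~\ref{DZ combined lemma} gives $\overline{H^*}(x)\sim E[Y^\alpha]\,\overline{F}(x)$ and $P[X^*Y^*_\theta>x]\sim E[(Y^*_\theta)^\alpha]\,\overline{F}(x)$, so the ratio tends to $E[(Y^*_\theta)^\alpha]/E[Y^\alpha]$, which is strictly positive unless $Y^*_\theta=0$ almost surely. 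If you want a self-contained statement, the cleanest formulation is $P[XY>x]-P[X^*Y^*_\theta>x]=o(\overline{H^*}(x))$; this is exactly what your computation establishes and is all that is ever used downstream.
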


\cite{yang:wang:2013} considered one of the conditions proposed by \cite{denisov:zwart:2007} on $(X, Y)$, and showed that
\begin{equation} \label{product:yang:wang}
P[X Y > x] \sim (E[Y^{\alpha}] + \theta d_{1} E[\phi_{2}(Y) Y^{\alpha}]) \overline{F}(x),
\end{equation}
In Section \ref{sec:product}, we show that \eqref{product:yang:wang} still holds under the remaining three conditions assumed by \cite{denisov:zwart:2007}.

Under the same condition as used in establishing \eqref{product:yang:wang}, \cite{yang:wang:2013} showed that the finite time ruin probability
\begin{equation} \label{finitesum:yang:wang}
\Psi(x, n) \sim \frac{1 - E[Y^{\alpha}]^{n}}{1 - E[Y^{\alpha}]} (E[Y^{\alpha}] + \theta d_{1} E[\phi_{2}(Y) Y^{\alpha}]) \overline{F}(x),
\end{equation}
where they used the convention that $(1 - E[Y^{\alpha}]^{n})/(1 - E[Y^{\alpha}]) = n$ when $E[Y^{\alpha}] = 1$. In section \ref{sec:finite sum}, we again extend \eqref{finitesum:yang:wang} under the remaining three conditions of \cite{denisov:zwart:2007}.

\cite{yang:wang:2013} showed that the infinite time ruin probability, assuming extra moments of $Y_{j}$s as in \cite{hazra:maulik:2012}, satisfies
\begin{equation} \label{infinitesum:yang:wang}
\Psi(x) \sim \frac{E[Y^{\alpha}] + \theta d_{1} E[\phi_{2}(Y) Y^{\alpha}]}{1 - E[Y^{\alpha}]} \overline{F}(x).
\end{equation}
In Section \ref{sec:infinite sum}, we prove \eqref{infinitesum:yang:wang} assuming only the conditions in \cite{denisov:zwart:2007}, and some uniform integrability assumptions.

\section{Product results} \label{sec:product}
We start this section with collecting the main product results of \cite{denisov:zwart:2007}. We first recall the complete characterization of slowly varying functions from Lemma~2.1 of \cite{denisov:zwart:2007} in our Lemma \ref{slowly varying categories}. We then state, in Theorem \ref{denisov:zwart}, the four sufficient conditions given in Propositions 2.1 through 2.3 of \cite{denisov:zwart:2007}.
\begin{lem} \label{slowly varying categories}
Let $X$ be nonnegative with tail distribution $\overline{F} \in RV_{-\alpha}$. We write $\overline{F}(x) = x^{-\alpha}L(x)$, where $L$ is slowly varying. In this case, $L$ must be one of the following forms:\\
\begin{enumerate}[(i)]
\item \label{(i)} $L(x) = c(x);$\\
\item \label{(ii)}$ L(x) = c(x)/P(V>\log x);$\\
\item \label{(iii)} $L(x) = c(x)P(U > \log x);$\\
\item \label{(iv)} $L(x) = c(x)P(U > \log x)/P(V > \log x);$\\
\end{enumerate}
where $U$ and $V$ are long tailed random variables and $\lim_{x \rightarrow \infty}c(x) = c \in (0, \infty)$.
\end{lem}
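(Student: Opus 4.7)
The plan is to start from Karamata's representation theorem, which writes any slowly varying $L$ in the form
\begin{equation*}
L(x) = c(x)\exp\!\left(\int_{1}^{x}\frac{\epsilon(t)}{t}\,dt\right),
\end{equation*}
with $c(x)\to c\in(0,\infty)$ and $\epsilon(t)\to 0$. Substituting $y=\log x$ and $\tilde{\epsilon}(s)=\epsilon(e^{s})$ converts the integral into $h(y):=\int_{0}^{y}\tilde{\epsilon}(s)\,ds$, so $L(e^{y})=c(e^{y})e^{h(y)}$. The whole task is to re-express $e^{h(y)}$ in terms of tails of long-tailed random variables.

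Next I would decompose $\tilde{\epsilon}=\tilde{\epsilon}^{+}-\tilde{\epsilon}^{-}$ into positive and negative parts, both of which still tend to $0$, and set
\begin{equation*}
h^{\pm}(y)=\int_{0}^{y}\tilde{\epsilon}^{\pm}(s)\,ds,
\end{equation*}
so that $h=h^{+}-h^{-}$ with each $h^{\pm}$ continuous, non-decreasing, and vanishing at $0$. When $h^{+}(\infty)<\infty$ the factor $e^{h^{+}(y)}$ in $e^{h(y)}=e^{h^{+}(y)}e^{-h^{-}(y)}$ converges to a strictly positive finite limit and is absorbed into the slowly varying prefactor; the same goes for $e^{-h^{-}(y)}$ when $h^{-}(\infty)<\infty$. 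When $h^{+}(\infty)=\infty$, define $V$ through the tail $\overline{F}_{V}(y)=e^{-h^{+}(y)}$ for $y\geq 0$ (and $1$ otherwise); analogously define $U$ via $\overline{F}_{U}(y)=e^{-h^{-}(y)}$ when $h^{-}(\infty)=\infty$. Each such $\overline{F}$ is right-continuous, non-increasing, starts at $1$, and decreases to $0$, and long-tailedness follows from
\begin{equation*}
\frac{\overline{F}(y+t)}{\overline{F}(y)}=\exp\!\Bigl(-\!\int_{y}^{y+t}\tilde{\epsilon}^{\pm}(s)\,ds\Bigr)\to 1
\end{equation*}
for every fixed $t$, since $\tilde{\epsilon}^{\pm}\to 0$.

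The four cases of the lemma then correspond to the four combinations of $h^{+}(\infty)$ and $h^{-}(\infty)$ being finite or infinite. Both finite gives (\ref{(i)}); only $h^{-}(\infty)=\infty$, so $L(e^{y})$ picks up the factor $e^{-h^{-}(y)}=\overline{F}_{U}(y)$, gives (\ref{(iii)}); only $h^{+}(\infty)=\infty$, contributing the factor $e^{h^{+}(y)}=1/\overline{F}_{V}(y)$, gives (\ref{(ii)}); and both infinite, combining the two, gives (\ref{(iv)}). The bounded convergent factors produced by the finite pieces are lumped together with $c(e^{y})$ into a new prefactor that still tends to a limit in $(0,\infty)$.

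The main obstacle I expect is clerical rather than conceptual: one must verify that each constructed tail is a right-continuous decreasing function normalisable to a genuine random variable, and that absorbing bounded convergent multiplicative factors into $c(\cdot)$ preserves the condition $c(x)\to c\in(0,\infty)$. A secondary subtlety is that Karamata's $\epsilon$ is specified only up to a measurable representative, but different choices with $\epsilon(t)\to 0$ produce $\tilde{\epsilon}^{\pm}$ whose integrated discrepancy is uniformly bounded, and so is again absorbed into the new $c(\cdot)$.
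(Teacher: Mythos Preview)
The paper does not give its own proof of this lemma; it simply recalls it from Lemma~2.1 of \cite{denisov:zwart:2007}. Your argument via the Karamata representation, splitting $\tilde\epsilon$ into positive and negative parts and building long-tailed random variables $U$ and $V$ from the resulting monotone exponentials, is correct and is essentially the same route taken in the original Denisov--Zwart proof.
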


\cite{denisov:zwart:2007} introduced the following four conditions, referred here as (DZ) conditions, enough to ensure Breiman-type results.
\begin{thm}  \label{denisov:zwart}
Let $X$ be nonnegative with tail distribution $\overline{F} \in RV_{-\alpha}$, and $Y$ be independent of $X$, satisfying $E(Y^{\alpha}) < \infty$ and $P\{Y>x\} = o(P\{X>x\})$ as $x \rightarrow \infty$. We write $\overline{F}(x) = x^{-\alpha}L(x)$, $L$ slowly varying. Consider the following conditions:
\renewcommand{\theenumi}{DZ\arabic{enumi}}
\begin{enumerate}
\item \label{DZ1} $\lim_{x \rightarrow \infty} \sup_{y \in [1,x]} L(y)/L(x) < \infty$;\\
\item \label{DZ2} $L$ is of type (\ref{(iii)}) or (\ref{(iv)}) and $L(e^{x}) \in \mathcal{S}_{d}$ ;\\
\item \label{DZ3} $L$ is of type (\ref{(iii)}) or (\ref{(iv)}), $U \in \mathcal{S}^{*}$ and $P(Y > x) = o(x^{-\alpha}P[U > \log x])$; \\
\item \label{DZ4} When $E[U] = \infty$ or equivalently $E[X^{\alpha}] = \infty$, we define $m(x) = \int_{0}^{x} v^{\alpha}F(dv) \rightarrow \infty$, and assume $P(Y>x) = o(P[X>x]/m(x))$ and $\limsup_{x \rightarrow \infty}\sup_{\sqrt{x} \leq y \leq x}L(y)/L(x) < \infty$.\\
\end{enumerate}
If any one of the above conditions holds, then $P(XY > x) \sim E(Y^{\alpha})P(X>x)$.
\end{thm}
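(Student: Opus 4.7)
My plan starts from the standard conditional decomposition. Since $Y$ is independent of $X$,
$$P(XY > x) = \int_0^\infty \overline{F}(x/y)\, G(dy) = \overline{F}(x) \int_0^\infty y^\alpha \frac{L(x/y)}{L(x)}\, G(dy),$$
so it suffices to prove that $\int_0^\infty y^\alpha L(x/y)/L(x)\, G(dy) \to E[Y^\alpha] < \infty$ as $x \to \infty$. For each fixed $y > 0$, slow variation of $L$ gives the pointwise convergence of the integrand to $y^\alpha$, so the task reduces to producing an appropriate $G$-integrable dominating function valid uniformly for large $x$ on a sequence of truncations, and each of the four DZ conditions is designed precisely to provide such a dominating function.

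I would split the range of integration into three pieces at a fixed constant $K$ and an $x$-dependent threshold $\tau(x)$, namely $\int_0^K + \int_K^{\tau(x)} + \int_{\tau(x)}^\infty$. The first piece is controlled by the uniform convergence of $L(x/y)/L(x) \to 1$ on compact $y$-sets via Potter's theorem, contributing $\int_0^K y^\alpha\, G(dy)$ in the limit and hence $E[Y^\alpha]$ as $K \to \infty$. The third piece, with $\tau(x)$ of order $x$, is controlled by the blanket hypothesis $P(Y > y) = o(\overline{F}(y))$ combined with the trivial estimate $\overline{F}(x/y) \leq 1$; under DZ3 and DZ4 the sharper tail assumption on $Y$ yields the same conclusion with room to spare.

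The heart of the argument, and the main obstacle, is the middle region $y \in (K, \tau(x)]$, where the four conditions enter in four distinct ways. Under DZ1, the uniform Potter-type bound $\sup_{y \in [1,x]} L(y)/L(x) \leq C$ for large $x$ directly yields $L(x/y)/L(x) \leq C$ for $y \in [1, x]$, producing the $G$-integrable dominating function $C y^\alpha$, and dominated convergence closes the case. Under DZ2 and DZ3, I would invoke the structural classification of slowly varying functions from Lemma~\ref{slowly varying categories} (types~(\ref{(iii)}) and~(\ref{(iv)})) and perform the change of variables $y = e^s$, $x = e^t$: the middle-region integral becomes a convolution-type expression involving the subexponential density $L(e^\cdot)$ (respectively the subexponential random variable $U$), and the defining convolution asymptotics of subexponentiality together with the matching tail condition on $Y$ control it. Under DZ4, where $E[X^\alpha] = \infty$ and $m(x) \to \infty$, I would use integration by parts to convert $\int \overline{F}(x/y)\, G(dy)$ into an expression involving $m(\cdot)$, then combine the hypothesis $P(Y > y) = o(\overline{F}(y)/m(y))$ with the restricted Potter-type bound over $[\sqrt{x}, x]$ to show the middle contribution is $o(\overline{F}(x))$.

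I expect the technically hardest parts to be DZ2 and DZ3, where the subexponentiality of $L(e^\cdot)$ or $U$ must be coupled precisely with the logarithmic change of variables to generate the correct convolution and the matching bound on $\overline{G}$ must be engaged at the right scale. DZ1 is essentially a clean Potter-bound exercise, and DZ4 is mostly careful bookkeeping once the integration-by-parts reformulation is in place and the divergence of $m(x)$ is exploited. Rather than attempt a single unified proof, I would treat the four cases in parallel sub-arguments, each drawing on the specific structural property supplied by that DZ condition, and then collect them into the single conclusion $P(XY > x) \sim E[Y^\alpha]\,\overline{F}(x)$.
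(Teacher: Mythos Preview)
The paper does not prove this theorem at all: Theorem~\ref{denisov:zwart} is presented purely as a quotation of Propositions~2.1--2.3 of \cite{denisov:zwart:2007}, with no argument supplied, so there is no ``paper's own proof'' against which to compare your proposal. Your sketch is a reasonable outline of how the original Denisov--Zwart arguments go (conditional decomposition, three-region split, and case-by-case domination using the structural property encoded in each DZ condition), but since the present paper simply cites the result, any proof you write goes beyond what the paper itself does.
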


We need one more property of bivariate Sarmanov distribution, which is from Proposition 1.1 of \cite{yang:wang:2013}.
\begin{lem} \label{kernel:bounded}
Assume that $(X,Y)$ follows a proper bivariate Sarmanov distribution. Then there exist two positive constants $b_{1}$ and $b_{2}$ such that $|\phi_{1}(x)| \leq b_{1}$ for all $x \in D_{X}$ and $|\phi_{2}(y)| \leq b_{2}$ for all $y \in D_{Y}$.
\end{lem}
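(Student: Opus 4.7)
The plan is to trap $\phi_1$ between two linear inequalities derived from the Sarmanov non-negativity constraint $1+\theta\phi_1(x)\phi_2(y)\ge 0$, evaluated at carefully chosen witness points in $D_Y$. First I would use the properness hypothesis together with $E[\phi_2(Y)]=0$ to deduce that $\phi_2$ takes both strict signs on $G$-positive sets: if $\phi_2 \ge 0$ $G$-almost surely, then $E[\phi_2(Y)]=0$ would force $\phi_2 = 0$ $G$-a.s., contradicting the fact that $\phi_2$ does not vanish identically; the symmetric argument rules out $\phi_2 \le 0$ $G$-a.s. Since $D_Y$ is the topological support of $G$ (so $G(D_Y)=1$), every set of positive $G$-measure meets $D_Y$, and I may choose $y_1,y_2 \in D_Y$ with $\phi_2(y_1)>0$ and $\phi_2(y_2)<0$.

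Holding these two witnesses fixed, the inequalities $1+\theta\phi_1(x)\phi_2(y_i)\ge 0$ must hold for every $x \in D_X$. The witness with $\phi_2(y_1)>0$ yields a one-sided bound on $\theta\phi_1(x)$, and the witness with $\phi_2(y_2)<0$ yields the opposite one-sided bound. Since $\theta \ne 0$, combining them produces a uniform two-sided bound
\[
|\phi_1(x)| \le \frac{1}{|\theta|}\max\left\{\frac{1}{\phi_2(y_1)},\,\frac{1}{|\phi_2(y_2)|}\right\} =: b_1
\]
for every $x \in D_X$. The bound $|\phi_2(y)|\le b_2$ on $D_Y$ is then obtained by the completely analogous argument with the roles of $X$ and $Y$ (and of $\phi_1$ and $\phi_2$) interchanged, appealing to $E[\phi_1(X)]=0$ and the non-vanishing of $\phi_1$.

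The only slightly delicate point is the first step: guaranteeing that the witnesses $y_1,y_2$ actually lie in the support $D_Y$ itself, rather than in some abstract set of positive probability where the constraint might not have been imposed. This is resolved by the elementary measure-theoretic observation that $G(D_Y)=1$. Beyond that the argument is purely algebraic manipulation of the Sarmanov inequality, so no substantial analytic obstacle appears.
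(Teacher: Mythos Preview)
Your argument is correct: using $E[\phi_2(Y)]=0$ together with properness to force $\phi_2$ to change sign on the support, and then plugging two witness points of opposite sign into the constraint $1+\theta\phi_1(x)\phi_2(y)\ge 0$ to sandwich $\theta\phi_1(x)$, is exactly the right mechanism, and the symmetric step for $\phi_2$ goes through identically.

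As for comparison, the paper does not actually prove this lemma at all; it simply imports it as Proposition~1.1 of \cite{yang:wang:2013}. Your proof is the standard one and is essentially what is found in that reference, so there is no meaningful methodological difference to discuss.
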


In the rest of the section, $X$ and $Y$ jointly follow bivariate Sarmanov, \eqref{limit of phi} holds and we additionally have
\begin{equation} \label{generic condition}
E[Y^{\alpha}] < \infty \quad \text{and} \quad P[Y > x] = o(P[X > x]) \Rightarrow \overline{G}(x) = o(\overline{F}(x)).
\end{equation}
We further assume that any one of the last three (DZ) conditions (\ref{DZ2}), (\ref{DZ3}) and (\ref{DZ4}) holds, and investigate the behaviour of the product $XY$.

Let $X^{*}$ and $Y^{*}$ be two mutually independent random variables with distribution functions $F$ and $G$ respectively. Let $\overline{H^{*}}(x) = P[X^{*}Y^{*} > x]$. Let $Y_{\theta}^{*}$ be the twisted version of $Y$ as given by \eqref{twisted version}. Observe that by Lemma \ref{kernel:bounded}
\begin{equation} \label{bound on marginal of twisted}
\overline{G_{\theta}}(x) = \int_{x}^{\infty}(1 + \theta d_{1} \phi_{2}(y))dG(y) \leq (1 + |\theta d_{1}| b_{2})\overline{G}(x) = o(\overline{F}(x)),
\end{equation}
and
\begin{equation*} 
E[(Y^{*}_{\theta})^{\alpha}] = \int_{0}^{\infty}y^{\alpha}(1 + \theta d_{1} \phi_{2}(y))dG(y) \leq (1 + |\theta d_{1}| b_{2})E[({Y^{*}}^{\alpha})] < \infty.
\end{equation*}
In Lemma \ref{DZ combined lemma} we show how any (DZ) condition that holds for $(X, Y)$, also extends to $(X^{*}, Y_{\theta}^{*})$. As a result, using Theorem \ref{denisov:zwart} we are able to conclude that $$P[X Y > x] \sim [E(Y^{\alpha}) + \theta d_{1} E(\phi_{2}(Y)Y^{\alpha})] \overline{F}(x).$$

\begin{lem} \label{DZ combined lemma}
Let any one of the four (DZ) conditions hold for $(X, Y)$. Then it also holds for $(X^{*}, Y_{\theta}^{*})$.
\end{lem}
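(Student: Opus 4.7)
The plan is to exploit that each (DZ) condition decomposes into two kinds of hypotheses: (a) pure statements about the marginal distribution of $X$ (the form of $L$, membership of $L(e^x)$ in $\mathcal{S}_d$, properties of the long-tailed variables $U, V$, divergence of $m$, the supremum bounds on $L$), and (b) control on how small the tail of the second coordinate is relative to $X$ (summands like $P[Y > x] = o(\overline{F}(x))$, $P[Y>x] = o(x^{-\alpha} P[U > \log x])$, or $P[Y>x] = o(\overline{F}(x)/m(x))$), together with a finite $\alpha$-moment. Since $X^{*} \stackrel{d}{=} X$, every hypothesis of type (a) is inherited verbatim by $(X^{*}, Y_{\theta}^{*})$, so there is nothing to check for those. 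Only hypotheses of type (b), concerning $Y$, need translating to $Y_{\theta}^{*}$.

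The translation is a one-line argument based on Lemma~\ref{kernel:bounded}. Since $|\phi_{2}(y)| \le b_{2}$ on $D_{Y}$, we have
\begin{equation*}
0 \le 1 + \theta d_{1} \phi_{2}(y) \le 1 + |\theta d_{1}| b_{2} =: C,
\end{equation*}
so that $G_{\theta}(dy) \le C\,G(dy)$ as measures. Consequently $\overline{G_{\theta}}(x) \le C\,\overline{G}(x)$ and $E[(Y_{\theta}^{*})^{\alpha}] \le C\,E[Y^{\alpha}]$, as already noted in \eqref{bound on marginal of twisted}. In other words, replacing $Y$ by $Y_{\theta}^{*}$ can only inflate tail probabilities and $\alpha$-moments by the fixed constant $C$, which is invisible to any $o(\cdot)$ statement and preserves finiteness of moments.

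With these two observations in hand I would simply run through the four conditions. For (DZ1) and (DZ2), the $Y$-side assumption is only the blanket requirement $E[Y^{\alpha}] < \infty$ and $\overline{G}(x) = o(\overline{F}(x))$ from \eqref{generic condition}; both carry to $Y_{\theta}^{*}$ via the constant-$C$ bound. For (DZ3), the additional input is $P(Y > x) = o(x^{-\alpha} P[U > \log x])$, and multiplying by $C$ again preserves the little-oh. For (DZ4), one checks the growth condition on $m(x)$ (which is determined by $F$ alone and so unchanged) and the tail bound $P(Y > x) = o(\overline{F}(x)/m(x))$, once more stable under the multiplicative constant $C$.

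There is essentially no obstacle here; the only thing to be careful about is to keep bookkeeping clean, verifying explicitly that the parts of each (DZ) hypothesis that reference $X$'s marginal truly depend only on $F$ (and the associated $L$, $U$, $V$, $c$, $m$), and that every hypothesis on $Y$ is of the tail-smallness or moment-finiteness form that is preserved under domination by a constant multiple.
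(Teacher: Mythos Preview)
Your proposal is correct and follows essentially the same approach as the paper: both arguments observe that the parts of each (DZ) condition depending on $X$ alone transfer automatically since $X^{*}\stackrel{d}{=}X$, while the parts depending on $Y$ transfer via the domination $\overline{G_{\theta}}(x)\le (1+|\theta d_{1}|b_{2})\overline{G}(x)$ furnished by Lemma~\ref{kernel:bounded}. Your explicit (a)/(b) taxonomy is a slightly cleaner packaging of the same idea, but there is no substantive difference.
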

\begin{proof}
Because each of the four (DZ) conditions involves only the properties of the marginal distributions of $X$ and $Y$, hence if they hold for $(X, Y)$, they also hold for $(X^{*}, Y^{*})$. For this same reason, (\ref{DZ1}) and (\ref{DZ2}) extend to $(X^{*}, Y_{\theta}^{*})$, and because of \eqref{bound on marginal of twisted}, (\ref{DZ3}) also extends to $(X^{*}, Y_{\theta}^{*})$.

We now consider (\ref{DZ4}). Because $X^{*}$ has the same distribution $F$ as $X$, hence $E[U] = \infty$ so that $m(x) \rightarrow \infty$, and $$\limsup_{x \rightarrow \infty}\sup_{\sqrt{x} \leq y \leq x}L(y)/L(x) < \infty.$$ From \eqref{bound on marginal of twisted},
$$\lim_{x \rightarrow \infty}\frac{P[Y^{*}_{\theta} > x]}{P[X^{*} > x]}m(x) \leq \lim_{x \rightarrow \infty}\frac{(1 + |\theta d_{1}| b_{2})\overline{G}(x)}{P[X^{*} > x]}m(x) = (1 + |\theta d_{1}| b_{2})\lim_{x \rightarrow \infty}\frac{\overline{G}(x)}{\overline{F}(x)}m(x) = 0.$$ Thus all aspects of condition (\ref{DZ4}) are satisfied by $(X^{*}, Y^{*}_{\theta})$.
\end{proof}

Summarizing everything, we have the following theorem.
\begin{thm} \label{final product thm}
The pair of random variables $(X, Y)$ jointly follow bivariate Sarmanov as given in Definition \ref{sarmanov definition}. $X$ nonnegative and $X \in RV_{-\alpha}$. We also assume $E[Y^{\alpha}] < \infty$, $P[Y > x] = o(P[X > x])$ and $\lim_{x \rightarrow \infty} \phi_{1}(x) = d_{1}$ exists. If any one of the three conditions (\ref{DZ2}), (\ref{DZ3}), (\ref{DZ4}) holds, then
$P[XY > x] \sim [E(Y^{\alpha}) + \theta d_{1} E\{\phi_{2}(Y) Y^{\alpha}\}] \overline{F}(x)$.
\end{thm}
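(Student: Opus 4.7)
The strategy is to combine the two key preceding results: Theorem \ref{almost:independent} reduces the dependent product $XY$ asymptotically to the independent product $X^* Y^*_\theta$, and Lemma \ref{DZ combined lemma} transfers the chosen (DZ) condition to this independent setting, so that the Breiman-type Theorem \ref{denisov:zwart} can finish the job.

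First I would verify the two hypotheses of Theorem \ref{almost:independent}, namely $H^* \in \mathcal{D}$ and $\overline{G}(x) = o(\overline{H^*}(x))$. Since the (DZ) conditions depend only on the marginals of $X$ and $Y$, whichever of (\ref{DZ2}), (\ref{DZ3}), (\ref{DZ4}) is assumed for $(X, Y)$ is also satisfied by the independent pair $(X^*, Y^*)$. Theorem \ref{denisov:zwart} therefore gives $\overline{H^*}(x) \sim E[Y^\alpha] \overline{F}(x)$. Disposing of the degenerate case $E[Y^\alpha] = 0$, which forces $Y \equiv 0$ and makes the conclusion trivial, we may assume $E[Y^\alpha] > 0$. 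Then $\overline{H^*}$ is asymptotically equivalent to a regularly varying tail, so $H^* \in \mathcal{D}$, and $\overline{G}(x) = o(\overline{F}(x)) = o(\overline{H^*}(x))$. Theorem \ref{almost:independent} now yields $P[XY > x] \sim P[X^* Y^*_\theta > x]$.

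Second, I would apply Lemma \ref{DZ combined lemma} to propagate the (DZ) condition from $(X, Y)$ to the independent pair $(X^*, Y^*_\theta)$. Combined with the already-recorded bounds $\overline{G_\theta}(x) \leq (1 + |\theta d_1| b_2)\overline{G}(x) = o(\overline{F}(x))$ and $E[(Y^*_\theta)^\alpha] < \infty$ (both consequences of Lemma \ref{kernel:bounded}), a second application of Theorem \ref{denisov:zwart}---this time to $(X^*, Y^*_\theta)$---gives $P[X^* Y^*_\theta > x] \sim E[(Y^*_\theta)^\alpha] \overline{F}(x)$. From the definition \eqref{twisted version} of $G_\theta$, a direct integration yields
\begin{equation*}
E[(Y^*_\theta)^\alpha] = \int_0^\infty y^\alpha \bigl(1 + \theta d_1 \phi_2(y)\bigr) \, dG(y) = E[Y^\alpha] + \theta d_1 E[\phi_2(Y) Y^\alpha],
\end{equation*}
which is exactly the constant in the stated asymptotic.

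There is no deep obstacle here: the proof is a careful orchestration of Theorems \ref{almost:independent} and \ref{denisov:zwart} together with Lemma \ref{DZ combined lemma}. The only mildly delicate point is the bootstrap structure of the argument---establishing the independent Breiman asymptotic for $\overline{H^*}$ is itself a prerequisite for invoking the decoupling result, so Theorem \ref{denisov:zwart} is applied twice: first to the independent pair $(X^*, Y^*)$ to verify the hypotheses of Theorem \ref{almost:independent}, and then to $(X^*, Y^*_\theta)$ to extract the final asymptotic constant.
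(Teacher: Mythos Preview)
Your proposal is correct and follows essentially the same route as the paper: reduce to the independent product $X^* Y^*_\theta$ via Theorem~\ref{almost:independent}, transfer the (DZ) condition through Lemma~\ref{DZ combined lemma}, and finish with Theorem~\ref{denisov:zwart}. In fact you are more explicit than the paper in one respect: you spell out the verification of the hypotheses $H^* \in \mathcal{D}$ and $\overline{G}(x) = o(\overline{H^*}(x))$ of Theorem~\ref{almost:independent} by first applying Theorem~\ref{denisov:zwart} to the independent pair $(X^*, Y^*)$, a step the paper leaves implicit.
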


Note that the similar result under the condition \eqref{DZ1} has already been proved in \cite{yang:wang:2013}.

\section{Finite Sum} \label{sec:finite sum}
We consider a sequence $\{(X_{i}, Y_{i})\}$ of independent and identically distributed random vectors, with the generic random vector $(X, Y)$ following bivariate Sarmanov and satisfying \eqref{limit of phi}. The finite time ruin probability under (\ref{DZ1}) has already been studied by \cite{yang:wang:2013}. We now show that if \eqref{generic condition} and any one of the three sufficient conditions (\ref{DZ2}), (\ref{DZ3}) and (\ref{DZ4}) is satisfied, then the same conclusion as in \eqref{finitesum:yang:wang} will hold.

Recall that the finite time ruin probability $\Psi(x, n) = P\left[\max_{1 \leq k \leq n} S_{k} > x \right]$. The first step is to prove
\begin{lem}
Assume that $\{(X_{i}, Y_{i}): i \in \mathbb{N}\}$ is an i.i.d.\ sequence of random vectors with the generic random vector $(X, Y)$ following a bivariate Sarmanov distribution as in Definition \ref{sarmanov definition}. Each $X_{i}$ is regularly varying with index $-\alpha$, and \eqref{limit of phi} holds.  If  $\overline{H_{i}}(x) = P[X_{i}\prod_{j=1}^{i}Y_{j} > x]$ and any of the conditions (\ref{DZ2}), (\ref{DZ3}) and (\ref{DZ4}) holds, then
\begin{equation} \label{sup like sum}
\Psi(x, n) \sim \sum_{i=1}^{n}\overline{H_{i}}(x).
\end{equation}
\end{lem}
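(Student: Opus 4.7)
I would establish this by induction on $n$. The base case $n=1$ is immediate since $\Psi(x,1) = P[X_1 Y_1 > x] = \overline{H_1}(x)$. For the inductive step, because $X_i, Y_j \geq 0$ the partial sums are nondecreasing, so $\Psi(x,n) = P[S_n > x]$; and I factor $S_n = Y_1(X_1 + \tilde{S}_{n-1})$, where $\tilde{S}_{n-1} := \sum_{i=2}^n X_i \prod_{j=2}^i Y_j$ is independent of $(X_1, Y_1)$ and has the same distribution as $S_{n-1}$.

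Writing $A = Y_1 X_1$ and $B = Y_1 \tilde{S}_{n-1}$, the central claim is $P[A + B > x] \sim P[A > x] + P[B > x]$. Both individual tails are regularly varying of index $-\alpha$: the tail of $A$ by Theorem \ref{final product thm}, and the tail of $B$ by Breiman's theorem in its (DZ)-extended form (Theorem \ref{denisov:zwart}), applied to the independent pair $Y_1$ (with $E[Y^\alpha] < \infty$) and $\tilde{S}_{n-1}$ (regularly varying by the inductive hypothesis, since each $\overline{H_i}$ inherits regular variation from $\overline{F}$). Standard inclusion-exclusion, via $\{A+B>x\} \subseteq \{A > (1-\epsilon)x\} \cup \{B > (1-\epsilon)x\} \cup \{A > \epsilon x, B > \epsilon x\}$ for the upper bound and $P[A+B>x] \geq P[A>x] + P[B>x] - P[A>x, B>x]$ for the lower bound, then reduces the central claim to the estimate $P[A > \epsilon x, B > \epsilon x] = o(\overline{F}(x))$ for each $\epsilon > 0$, followed by letting $\epsilon \downarrow 0$.

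The overlap estimate is the main obstacle. Conditioning on $Y_1$ and using that $\tilde{S}_{n-1}$ is independent of $Y_1$ together with the Sarmanov kernel bound from Lemma \ref{kernel:bounded}, the overlap is bounded (up to a constant) by $E\bigl[\, \overline{F}(\epsilon x/Y_1) \cdot P[\tilde{S}_{n-1} > \epsilon x/Y_1]\,\bigr]$. A direct Potter estimate would demand $E[Y^{2\alpha}] < \infty$, which is not available; instead I would exploit the specific (DZ) condition in force, each of (\ref{DZ2}), (\ref{DZ3}), (\ref{DZ4}) providing a sharper bound on $\overline{G}$ beyond just $\overline{G}(y) = o(\overline{F}(y))$---for instance $\overline{G}(y) = o(\overline{F}(y)/m(y))$ under (\ref{DZ4})---which together with a truncation of $Y_1$ at a level tied to the decay of $L$ and uniform regular variation on the truncated event yields the desired $o(\overline{F}(x))$ control.

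Once the core claim holds, everything slots into place: $P[A > x] = \overline{H_1}(x)$ by definition; $P[B > x] \sim E[Y^\alpha]\, P[\tilde{S}_{n-1} > x] \sim E[Y^\alpha] \sum_{i=1}^{n-1} \overline{H_i}(x)$ by the Breiman/DZ step and the inductive hypothesis; and the relation $\overline{H_{i+1}}(x) \sim E[Y^\alpha]\, \overline{H_i}(x)$, itself a Breiman/DZ statement about $Z_i = (X_i Y_i) \cdot \prod_{j<i} Y_j$ with the independent factor $\prod_{j<i} Y_j$, converts this into $\sum_{i=2}^n \overline{H_i}(x)$. Summing with $\overline{H_1}(x)$ completes the induction.
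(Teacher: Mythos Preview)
The paper does not actually prove this lemma: it states that the argument is ``similar to that of Theorem~4.1 in \cite{yang:wang:2013}'' and omits it. So there is no detailed proof to compare against; your recursive decomposition $S_n=Y_1(X_1+\tilde S_{n-1})$ with induction is a perfectly standard route to such a result and is compatible with what the paper does afterwards (the transfer of the DZ condition to $(\tilde S_{n-1},Y_1)$ is precisely the content of the paragraph around \eqref{induction} and Lemma~\ref{induction for DZ4}).

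Two remarks on gaps. First, your reduction $\Psi(x,n)=P[S_n>x]$ assumes $X_i\ge 0$; the Sarmanov setup in Definition~\ref{sarmanov definition} allows real-valued $X$, and the appearance of $Z_i^+$ in Section~\ref{sec:infinite sum} suggests this is intended. If $X$ can be negative you must treat the running maximum rather than $S_n$, e.g.\ by squeezing $\max_{k\le n}S_k$ between $S_n$ and $\sum_{i\le n}(X_i\prod_{j\le i}Y_j)^+$. Second, you overstate the difficulty of the overlap bound. Since $\tilde S_{n-1}$ is independent of $(X_1,Y_1)$, splitting on $\{Y_1\le\delta x\}$ gives, for any $\delta>0$,
\[
P[A>\epsilon x,\,B>\epsilon x]\ \le\ P[X_1Y_1>\epsilon x]\,P[\tilde S_{n-1}>\epsilon/\delta]\ +\ P[Y_1>\delta x].
\]
Dividing by $\overline F(x)$, the first term tends to a constant multiple of $P[\tilde S_{n-1}>\epsilon/\delta]$ by Theorem~\ref{final product thm} and regular variation, which vanishes as $\delta\downarrow 0$; the second term is $o(\overline F(x))$ by \eqref{generic condition}. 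No $E[Y^{2\alpha}]$ moment and no case-by-case DZ analysis is needed for this step---the only place the specific DZ condition enters is in the Breiman step for $P[B>x]$, which you already invoke.
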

The proof of \eqref{sup like sum} is similar to that of Theorem~4.1 in \cite{yang:wang:2013}, and hence we omit it.

The crucial step is then to establish that
\begin{equation} \label{induction}
\overline{H_{i}}(x) \sim \{E(Y^{\alpha})\}^{i-1}\overline{H}(x) \sim \{E(Y^{\alpha})\}^{i-1} [E(Y^{\alpha}) + \theta d_{1}E(\phi_{2}(Y) Y^{\alpha})]\overline{F}(x) = \{E(Y^{\alpha})\}^{i-1} E[{Y^{*}_{\theta}}^{\alpha}] \overline{F}(x)
\end{equation}
where $\overline{H}(x) = \overline{H_{1}}(x) = P[X_{1}Y_{1} > x]$.

We prove \eqref{induction} using induction on $i$. It holds for $i=1$ using Theorem \ref{final product thm}. Assume that \eqref{induction} holds for some $i \geq 1$ which implies that $\overline{H_{i}} \in RV_{-\alpha}$ since $\overline{F} \in RV_{-\alpha}$. Hence we can write $\overline{H_{i}}(x) = x^{-\alpha}L_{i}(x)$ where $L_{i}$ is a positive slowly varying function. Clearly this means that, by our induction hypothesis,
$L_{i}(x) \sim \{E(Y^{\alpha})\}^{i-1} E[{Y^{*}_{\theta}}^{\alpha}]L(x)$,
where $\overline{F}(x) = x^{-\alpha} L(x)$. Hence it is immediate that $L_{i}$ will have the same form as $L$, that is, the appropriate one from (\ref{(i)}) through (\ref{(iv)}) of Lemma \ref{slowly varying categories} holds. Since (\ref{DZ2}) and (\ref{DZ3}) involve only the asymptotic tail properties of $L$, they carry over to $L_{i}$ as well. We separately check the similar extension of the result for (\ref{DZ4}).

\begin{lem} \label{induction for DZ4}
If $(X, Y)$, or equivalently, $(F, G)$, satisfies (\ref{DZ4}), and \eqref{induction} holds for some $i \geq 1$, then the joint distribution $(H_{i}, G)$ also satisfies (\ref{DZ4}).
\end{lem}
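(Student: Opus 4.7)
The plan is to verify each of the four ingredients of condition (\ref{DZ4}) for the pair $(H_{i}, G)$, using the induction hypothesis \eqref{induction} as the sole extra input. This hypothesis gives
$$\overline{H_{i}}(x) \sim c_{i} \overline{F}(x) \quad \text{as } x \to \infty,$$
where $c_{i} = \{E(Y^{\alpha})\}^{i-1} E[{Y^{*}_{\theta}}^{\alpha}]$ is strictly positive and finite under the standing assumptions. Writing $\overline{H_{i}}(x) = x^{-\alpha} L_{i}(x)$, this is equivalent to $L_{i}(x) \sim c_{i} L(x)$.

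The infinite-expectation clause and the $o$-condition are the easy parts. Since $c_{i} > 0$ and $E[X^{\alpha}] = \infty$, Karamata's theorem applied to the slowly varying functions $L$ and $L_{i}$ shows that $\int_{1}^{x} v^{-1} L(v)\, dv$ diverges, and that $\int_{1}^{x} v^{-1} L_{i}(v)\, dv$ is asymptotic to $c_{i}$ times the former. Combined with the integration-by-parts identity
$$m(x) = -x^{\alpha} \overline{F}(x) + \int_{0}^{x} \alpha v^{\alpha - 1} \overline{F}(v)\, dv,$$
the analogous identity for $m_{i}$, and the fact that $x^{\alpha} \overline{F}(x) = L(x)$ is slowly varying and hence negligible against the diverging integral, I would conclude
$$m_{i}(x) \sim c_{i}\, m(x) \to \infty.$$
This gives the required divergence clause $E\bigl[(X_{1} \prod_{j=1}^{i} Y_{j})^{\alpha}\bigr] = \infty$, and combined with $\overline{H_{i}} \sim c_{i} \overline{F}$ converts the (\ref{DZ4}) hypothesis $\overline{G}(x) = o(\overline{F}(x)/m(x))$ directly into $\overline{G}(x) = o(\overline{H_{i}}(x)/m_{i}(x))$.

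The slowly-varying clause is a routine uniform-in-$y$ consequence of $L_{i} \sim c_{i} L$: for any $\varepsilon > 0$ there exists $B$ with $|L_{i}(v)/L(v) - c_{i}| < \varepsilon$ for $v \geq B$, so for $x$ large enough that $\sqrt{x} \geq B$ one has
$$\frac{L_{i}(y)}{L_{i}(x)} \leq \frac{c_{i} + \varepsilon}{c_{i} - \varepsilon}\, \frac{L(y)}{L(x)} \quad \text{uniformly in } y \in [\sqrt{x}, x].$$
Taking supremum over $y$ and then $\limsup$ in $x$ transfers the (\ref{DZ4}) bound for $L$ to $L_{i}$. The only step requiring real care is the asymptotic $m_{i} \sim c_{i} m$: this is where the slowly-varying equivalence has to be applied inside an integral over an unbounded interval, by splitting at a threshold beyond which $L_{i}/L$ is trapped in $[c_{i} - \varepsilon, c_{i} + \varepsilon]$ and noting that the contribution below the threshold is bounded while $m(x) \to \infty$.
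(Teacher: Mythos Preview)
Your argument is correct and follows the same route as the paper: transfer the $\limsup$-condition from $L$ to $L_i$ via $L_i \sim c_i L$, and control $m_i$ through the integration-by-parts identity for the truncated $\alpha$-moment together with Karamata-type facts about $\int_1^x v^{-1}L(v)\,dv$. The only difference is that you prove the sharper asymptotic $m_i(x) \sim c_i\, m(x)$ (and thereby also check the divergence clause $m_i(x)\to\infty$ explicitly), whereas the paper is content with the cruder bound $\limsup_{x\to\infty} m_i(x)/m(x) \le \sup_{x>0} \overline{H_i}(x)/\overline{F}(x) < \infty$, which already suffices for the $o$-condition.
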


\begin{proof}
Since, by induction hypothesis, we have $L_{i}(x)/L(x) \rightarrow \{E(Y^{\alpha})\}^{i-1} E[{Y^{*}_{\theta}}^{\alpha}]$, and \eqref{DZ4} holds for $L$, we have
$\limsup_{x \rightarrow \infty}\sup_{\sqrt{x} \leq y \leq x} {L_{i}(y)}/{L_{i}(x)} < \infty$.

Let us define $m_{i}(x) = \int_{0}^{x} t^{\alpha} dH_{i}(t)$. Observe that
$$m_{i}(x) = \alpha \int_{0}^{x} s^{\alpha - 1} \overline{H_{i}}(s) ds - x^{\alpha} \overline{H_{i}}(x), \qquad \text{and} \qquad
m(x) = \alpha \int_{0}^{x} s^{\alpha - 1} \overline{F}(s) ds - x^{\alpha} \overline{F}(x).$$
To check (\ref{DZ4}) for $(H_{i}, G)$ it is enough to check that $m_{i}(x)/m(x)$ is bounded. Observe that
\begin{align}
\limsup_{x \rightarrow \infty} \frac{m_{i}(x)}{m(x)} =& \limsup_{x \rightarrow \infty} \frac{1 - \frac{x^{\alpha} \overline{H_{i}}(x)}{\alpha \int_{0}^{x} s^{\alpha - 1} \overline{H_{i}}(s) ds}}{1 - \frac{x^{\alpha} \overline{F}(x)}{\alpha \int_{0}^{x} s^{\alpha - 1} \overline{F}(s) ds}} \cdot \frac{\int_{0}^{x} s^{\alpha - 1} \overline{H_{i}}(s) ds}{\int_{0}^{x} s^{\alpha - 1} \overline{F}(s) ds} \nonumber\\
=& 1. \limsup_{x \rightarrow \infty} \frac{\int_{0}^{x} s^{\alpha - 1} \overline{H_{i}}(s) ds}{\int_{0}^{x} s^{\alpha - 1} \overline{F}(s) ds} \leq \sup_{x > 0} \frac{\overline{H}_{i}(x)}{\overline{F}(x)} < \infty, \nonumber
\end{align}
where the second equality follows from Karamata's theorem. Hence, by the (\ref{DZ4}) condition on $L$,
$$\lim_{x \rightarrow \infty}\frac{\overline{G}(x)}{\overline{H_{i}}(x)}m_{i}(x) =\lim_{x \rightarrow \infty}\frac{\overline{G}(x)}{\overline{F}(x)}\frac{\overline{F}(x)}{\overline{H_{i}}(x)}\frac{m_{i}(x)}{m(x)} m(x) \leq \lim_{x \rightarrow \infty}\frac{\overline{G}(x)}{\overline{F}(x)}m(x) \lim_{x \rightarrow \infty} \frac{\overline{F}(x)}{\overline{H_{i}}(x)} \limsup_{x \rightarrow \infty}\frac{m_{i}(x)}{m(x)} = 0.$$
\end{proof}

Lastly, $Y_{1}$ is independent of $X_{i+1}Y_{i+1}Y_{i}...Y_{2}$ with distribution $H_{i}$. The appropriate (DZ) condition for $(H_{i}, G)$ gives
$$\overline{H_{i+1}}(x) = P[(X_{i+1}Y_{i+1}Y_{i}...Y_{2})Y_{1} > x] \sim E(Y^{\alpha})\overline{H_{i}}(x) \sim \{E(Y^{\alpha})\}^{i}\overline{H}(x).$$ This shows that the result \eqref{induction} holds for $i+1$ as well, and the induction is completed.

Summarizing, we now have the following theorem.
\begin{thm} \label{finite sum final thm}
Let $\{(X_{i}, Y_{i})\}$ be a sequence of independent and identically distributed random vectors, with the generic random vector $(X, Y)$ following bivariate Sarmanov as in Definition \ref{sarmanov definition}, with $X \in RV_{-\alpha}$. Suppose $E[Y^{\alpha}] < \infty, P[Y > x] = o(P[X > x])$ and $\lim_{x \rightarrow \infty} \phi_{1}(x) = d_{1}$. Let $\Psi(x, n)$ be as defined in \eqref{finite time ruin}. If any one of the conditions (\ref{DZ2}), (\ref{DZ3}) and (\ref{DZ4}) holds, then we have
\begin{equation*}
\Psi(x, n) \sim \frac{(1 - E[Y^{\alpha}]^{n}) \{E[Y^{\alpha}] + \theta d_{1} E[\phi_{2}(Y) Y^{\alpha}]\}}{(1 - E[Y^{\alpha}])} \overline{F}(x),
\end{equation*}
with the convention that $(1 - E[Y^{\alpha}]^{n})/(1 - E[Y^{\alpha}]) = n$ when $E[Y^{\alpha}] = 1$.
\end{thm}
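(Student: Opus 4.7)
The plan is to assemble the theorem from the two pieces already prepared in Section~\ref{sec:finite sum}: the reduction of $\Psi(x,n)$ to a finite sum of one-term tails via the earlier unproved lemma (the maximum-to-sum reduction \eqref{sup like sum}), and the asymptotic formula \eqref{induction} for the individual tails $\overline{H_i}$. Once both are in hand, the theorem follows by plugging the second into the first and summing a geometric series.

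First I would state, as an immediate consequence of the displayed lemma preceding \eqref{sup like sum}, that under the hypothesis $\Psi(x,n) \sim \sum_{i=1}^n \overline{H_i}(x)$; no additional work is needed for this step since the lemma is already in place. Second, I would establish \eqref{induction} by induction on $i$. The base case $i=1$ is exactly Theorem~\ref{final product thm} applied to $(X_1, Y_1)$. For the inductive step, I write $X_{i+1}\prod_{j=1}^{i+1}Y_j = (X_{i+1}\prod_{j=2}^{i+1}Y_j)\,Y_1$, where $Y_1$ is independent of the factor in parentheses, which by i.i.d.\ reindexing has the same distribution as $X_1\prod_{j=1}^{i}Y_j$, i.e.\ distribution $H_i$. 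To apply Theorem~\ref{denisov:zwart} to the independent pair (with marginal $H_i$ and $G$), I need $H_i \in RV_{-\alpha}$ together with the appropriate (DZ) condition. The regular variation follows from the induction hypothesis, which gives $L_i(x) \sim \{E(Y^\alpha)\}^{i-1} E[(Y^*_\theta)^\alpha] L(x)$, so $L_i$ has the same Karamata representation class as $L$; in particular, (DZ2) and (DZ3), which depend only on the asymptotic form of the slowly varying part, transfer verbatim from $L$ to $L_i$. The case (DZ4) is precisely Lemma~\ref{induction for DZ4}. With the relevant (DZ) condition in force for $(H_i, G)$, Theorem~\ref{denisov:zwart} gives $\overline{H_{i+1}}(x) \sim E(Y^\alpha)\,\overline{H_i}(x)$, closing the induction.

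Finally I would combine the two ingredients and sum: by \eqref{sup like sum} and \eqref{induction},
\begin{equation*}
\Psi(x,n) \sim \sum_{i=1}^n \overline{H_i}(x) \sim E[(Y^*_\theta)^\alpha]\,\overline{F}(x) \sum_{i=1}^n \{E(Y^\alpha)\}^{i-1}.
\end{equation*}
Recognizing $E[(Y^*_\theta)^\alpha] = E[Y^\alpha] + \theta d_1 E[\phi_2(Y) Y^\alpha]$ from \eqref{twisted version} and evaluating the geometric sum (treating $E[Y^\alpha]=1$ as the limiting case giving the factor $n$) yields the stated expression.

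The main obstacle is really the inductive transfer of the (DZ) hypothesis from $(F,G)$ to $(H_i, G)$, and within that the (DZ4) step, because (DZ4) involves the truncated $\alpha$-moment $m_i(x) = \int_0^x t^\alpha\,dH_i(t)$ rather than just the shape of the slowly varying function. This is exactly the content of Lemma~\ref{induction for DZ4}, where the comparison $\overline{H_i}/\overline{F} \to \{E(Y^\alpha)\}^{i-1} E[(Y^*_\theta)^\alpha]$ is used together with Karamata's theorem to compare $m_i(x)$ and $m(x)$; once that comparison is in place, the tail condition $\overline{G}(x) = o(\overline{H_i}(x)/m_i(x))$ reduces to the corresponding condition for $(F,G)$. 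The remaining pieces — the maximum-to-sum reduction, the product asymptotic, and the geometric summation — are routine given the results already developed in Sections~\ref{sec:product} and~\ref{sec:finite sum}.
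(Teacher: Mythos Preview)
Your proposal is correct and follows essentially the same approach as the paper: reduce $\Psi(x,n)$ to $\sum_{i=1}^n \overline{H_i}(x)$ via the lemma giving \eqref{sup like sum}, establish \eqref{induction} by induction using the factorization $X_{i+1}\prod_{j=1}^{i+1}Y_j = (X_{i+1}Y_{i+1}\cdots Y_2)Y_1$ with $Y_1$ independent of the first factor, transfer (DZ2)/(DZ3) to $(H_i,G)$ by the asymptotic $L_i \sim cL$ and invoke Lemma~\ref{induction for DZ4} for (DZ4), and then sum the resulting geometric series. Your identification of the (DZ4) transfer as the only nontrivial point matches the paper's emphasis exactly.
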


\section{Infinite sum} \label{sec:infinite sum}
In this section, we consider again a sequence $\{(X_{i}, Y_{i})\}$ of i.i.d. random vectors with the generic random vector $(X, Y)$ jointly bivariate Sarmanov, with both \eqref{limit of phi} and \eqref{generic condition} satisfied. Additionally, we assume that $E[Y^{\alpha}] < 1$. Now we show that, if any of the four (DZ) conditions is also satisfied, along with some uniform integrability condition, then the same conclusion as \eqref{infinitesum:yang:wang} holds, that is
\begin{equation*}
\lim_{x \rightarrow \infty} \frac{\Psi(x)}{\overline{F}(x)} = \frac{E[Y^{\alpha}] + \theta d_{1} E[\phi_{2}(Y) Y^{\alpha}]}{1 - E[Y^{\alpha}]} = \frac{E[{Y^{*}_{\theta}}^{\alpha}]}{1 - E[Y^{\alpha}]},
\end{equation*}
where $Y_{\theta}^{*}$ is the twisted version of $Y$ given in \eqref{twisted version}.
The lower bound for $\Psi(x)/\overline{F}(x)$ follows immediately from a common argument for all the four (DZ) conditions:

For any $m \in \mathbb{N}$, using Theorem \ref{finite sum final thm}, or Theorem~4.1 of \cite{yang:wang:2013}, we get
$$\frac{\Psi(x)}{\overline{F}(x)} \geq \frac{\Psi(x, m)}{\overline{F}(x)} \sim \frac{1-\{E(Y^{\alpha})\}^{m}}{1-E(Y^{\alpha})} \cdot E[{Y^{*}_{\theta}}^{\alpha}],$$
and the desired lower bound now follows by letting $m \rightarrow \infty$.

For the upper bound we proceed as follows. Let $\zeta_{i} = \prod_{j=1}^{i-1}Y_{j}$ and $Z_{i} = X_{i}Y_{i}$. Observe that $Z_{i}$ and $\zeta_{i}$ are mutually independent. Then for any natural number $m$, any constant $0 < \delta < 1$ and any $x \geq 0$,
\begin{equation} \label{split sum}
P \left[ \sup_{1 \leq n < \infty} \sum_{i=1}^{n}Z_{i}\zeta_{i} > x \right] \leq P \left[\max_{1 \leq k \leq m} \sum_{i=1}^{k}Z_{i}\zeta_{i} > (1-\delta)x \right] + P \left[\sum_{i=m+1}^{\infty}Z_{i}\zeta_{i} > \delta x \right].
\end{equation}
Using Theorem~4.1 of \cite{yang:wang:2013} for (\ref{DZ1}), and Theorem \ref{finite sum final thm} for (\ref{DZ2}), (\ref{DZ3}) or (\ref{DZ4}), we have
$$P \left[\max_{1 \leq k \leq m} \sum_{i=1}^{k}Z_{i}\zeta_{i} > (1-\delta)x \right]=\Psi((1-\delta)x,m) \sim \frac{1-\{E(Y^{\alpha})\}^{m}}{1-E(Y^{\alpha})} E[{Y^{*}_{\theta}}^{\alpha}] \overline{F}((1-\delta)x).$$

Since $\overline{F} \in RV_{-\alpha}$, we have $\limsup_{x \rightarrow \infty} {P[\max_{1 \leq k \leq m} \sum_{i=1}^{k}Z_{i}\zeta_{i} > (1-\delta)x]}/{\overline{F}(x)} \leq \frac{E[{Y^{*}_{\theta}}^{\alpha}]}{1-E(Y^{\alpha})} (1-\delta)^{-\alpha}$.

We obtain the desired upper bound by making the second term of \eqref{split sum} arbitrarily small for suitably large $m$ and for all sufficiently large values of $x$, and finally letting $\delta \rightarrow 0$.
\begin{equation} \label{second term}
P\left[\sum_{i=m+1}^{\infty}Z_{i}\zeta_{i} > x \right] \leq  \sum_{i=m+1}^{\infty}P \left[Z_{i}\zeta_{i} > x \right] + P \left[\sum_{i=m+1}^{\infty}Z_{i}\zeta_{i} \mathbf{1}_{[Z_{i}\zeta_{i} \leq x]} > x \right].
\end{equation}

We bound the second term of \eqref{second term}, separately for $\alpha <1$ and $\alpha \geq 1$, arguing as in the proof of Theorem~4.2 in \cite{yang:wang:2013}. For $\alpha < 1$, we use Markov's inequality and for $\alpha \geq 1$ we use Minkowski's inequality. In both cases, using Karamata's theorem, we get a constant $C$ such that
\[\frac{P[\sum_{i=m+1}^{\infty}Z_{i}^{+}\zeta_{i} \mathbf{1}_{[Z_{i}^{+}\zeta_{i} \leq x]} > x]}{\overline{F}(x)} \leq \left\{\begin{array}{ll}
C \sum_{i=m+1}^{\infty} \frac{P[Z_{i}\zeta_{i}>x]}{\overline{F}(x)}& \mbox{if $\alpha < 1$,}\\
\sum_{i=m+1}^{\infty} \frac{P[Z_{i}\zeta_{i}>x]}{\overline{F}(x)} \\ + C[\sum_{i=m+1}^{\infty} (\frac{P[Z_{i}\zeta_{i}>x]}{\overline{F}(x)})^{\frac{1}{\alpha+\varepsilon}}]^{\alpha+\varepsilon}&\mbox{if $\alpha \geq 1$.}
\end{array}\right.\]

Then the upper bound will be established by showing that
${P[Z_{i}\zeta_{i}>x]}/{\overline{F}(x)} \leq B_{i}$
uniformly for all large values of $x$, that is, there exists $x_0$ such that for all $x>x_0$, we have $P[Z_{i}\zeta_{i}>x] \leq B_i \overline F(x)$ for all $i$. Here $B_{i}$ is a finite positive constant such that
\begin{equation} \label{summability}
\sum_{i=1}^{\infty}B_{i} < \infty \quad \text{for } \alpha < 1 \quad \text{and} \quad
\sum_{i=1}^{\infty}B_{i}^{\frac{1}{\alpha+\varepsilon}} < \infty \quad \text{for } \alpha \geq 1.
\end{equation}

For this, it will be sufficient to produce an upper bound for $P[Z_{i}\zeta_{i}>x]/P[Z_{i} > x]$ which satisfies \eqref{summability}. We split the ratio as follows:
$$\frac{P[Z_{i}\zeta_{i} > x]}{P[Z_{i} > x]} = \int_{(0, 1]} + \int_{(1, \infty)} \frac{P[Z_{i} >x/v]}{P[Z_{i} > x]}G_{i}(dv),$$ where $G_{i}$ is the distribution function of $\zeta_{i}$. As $x \rightarrow \infty$, the integrand converges to $v^{\alpha}$ uniformly in $v$ over the first interval and hence, for all large enough $x$,
$\int_{(0,1]}\frac{P[Z_{i} >x/v]}{P[Z_{i} > x]}G_{i}(dv) \leq 2E(\zeta_{i}^{\alpha})$.
The bound for the other integral is provided separately for the four (DZ) conditions. Recall that the (DZ) conditions are given in terms of the slowly varying function $L(x)=x^\alpha \overline F(x)$.

\begin{lem} \label{DZ1:infinite sum}
Let $\{(X_{n},Y_{n})\}$ be i.i.d.\ random vectors, with the generic random vector $(X, Y)$ jointly distributed as bivariate Sarmanov, and satisfying \eqref{limit of phi} and \eqref{generic condition}. Also, the (\ref{DZ1}) condition holds and $E[Y^{\alpha}] < 1$. Then
\begin{equation} \label{DZ1:second}
\int_{(1, \infty)} \frac{P[Z_{i} > x/v]}{P[Z_{i} > x]}G_{i}(dv)  \leq C' E(\zeta_{i}^{\alpha}),
\end{equation}
for some constant $C'$ independent of $i$, and for all sufficiently large $x$ uniformly in $i$.
\end{lem}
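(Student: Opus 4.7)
The plan is to split the integral at a threshold of the form $v = x/X_0$, so that (DZ1) provides a direct bound on the bulk piece while Markov's inequality handles the tail piece. The preparatory observation is that since $(X_i, Y_i)$ are i.i.d., the products $Z_i = X_i Y_i$ all share a common tail, which I write as $\overline{H}(x) = P[X_1 Y_1 > x] = x^{-\alpha} L_H(x)$ with $L_H$ slowly varying. Under (\ref{DZ1}), the product result of \cite{yang:wang:2013} gives $L_H(x) \sim E[{Y^*_\theta}^\alpha]\, L(x)$; combining this with the fact that (\ref{DZ1}) on $L$ (applied at $y=1$) forces $L$, and hence $L_H$, to stay bounded away from zero for large arguments, one obtains constants $K$, $c_0 > 0$, and $X_0 \ge 1$, all independent of $i$, such that $\sup_{y \in [1,x]} L_H(y)/L_H(x) \le K$ and $L_H(x) \ge c_0$ for every $x \ge X_0$.

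Now split
\[
\int_{(1,\infty)} \frac{\overline{H}(x/v)}{\overline{H}(x)} G_i(dv) = \int_{(1,\,x/X_0]} \frac{\overline{H}(x/v)}{\overline{H}(x)} G_i(dv) + \int_{(x/X_0,\,\infty)} \frac{\overline{H}(x/v)}{\overline{H}(x)} G_i(dv).
\]
On the bulk piece one has $x/v \ge X_0$, so rewriting the integrand as $v^\alpha L_H(x/v)/L_H(x)$ and invoking (\ref{DZ1}) for $L_H$ dominates it by $K v^\alpha$, contributing at most $K E[\zeta_i^\alpha]$. On the tail piece I would use the crude bound $\overline{H}(x/v) \le 1$ together with $\overline{H}(x) \ge c_0/x^\alpha$ to dominate the integrand by $x^\alpha/c_0$, and then apply Markov's inequality $P[\zeta_i > x/X_0] \le (X_0/x)^\alpha E[\zeta_i^\alpha]$ to cancel the factor $x^\alpha$, producing the bound $(X_0^\alpha/c_0)\, E[\zeta_i^\alpha]$. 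Summing the two yields the claim with $C' = K + X_0^\alpha/c_0$.

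The step to get right, and the only genuine subtlety, is the uniformity of $C'$ and of the threshold $X_0$ in $i$. This holds because $K$, $c_0$ and $X_0$ are determined entirely by $L_H$, the slowly varying part of the tail of a single product $XY$, which does not change with $i$; the entire $i$-dependence of the left-hand side is absorbed into $G_i$ and then passed to the factor $E[\zeta_i^\alpha]$ via the Markov estimate. A useful feature of (\ref{DZ1}), as opposed to Potter-type bounds, is that on the bulk range it supplies a pure constant $K$ with no extra $v^\delta$ factor; this is what keeps the final control at the $\zeta_i^\alpha$ level and avoids demanding higher moments on the $Y_j$s.
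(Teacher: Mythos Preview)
Your argument is correct and essentially the same as the paper's: both split the range of $v$ at a point of order $x$, apply the (\ref{DZ1}) bound (transferred from $L$ to $L_H$ via $L_H \sim E[{Y^*_\theta}^\alpha]\,L$) to control $v^\alpha L_H(x/v)/L_H(x)$ on the bulk, and use Markov's inequality $P[\zeta_i > t] \le t^{-\alpha} E[\zeta_i^\alpha]$ together with the lower bound $L_H(x) \ge c_0$ on the tail. The only cosmetic difference is that the paper splits at $v = x$ (so $x/v \ge 1$) and invokes the full (\ref{DZ1}) supremum over $y \in [1,x]$ directly, whereas you split at $v = x/X_0$ to keep $x/v \ge X_0$; both yield the same constant-times-$E[\zeta_i^\alpha]$ bound uniformly in $i$.
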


\begin{proof}
We have $\overline{H}(x) = P(Z_{i} > x) = P(X_{i}Y_{i} > x) = x^{-\alpha}L_{1}(x)$ where $L_{1}$ is slowly varying. Then
$\lim_{x \rightarrow \infty}L_{1}(x)/L(x) = E[{Y^{*}_{\theta}}^{\alpha}] \in (0, \infty)$. Thus $\limsup_{x \rightarrow \infty} \sup_{1 \leq y \leq x} L_{1}(y)/L_{1}(x)$ is finite.
We split the integral in \eqref{DZ1:second} over two intervals, $(1, x]$ and $(x, \infty)$. For the integral over $(x, \infty)$, for all $x$ large enough, uniformly in $i$, we have:
$$\int_{(x,\infty)}\frac{P[Z_{i} >x/v]}{P[Z_{i} > x]}G_{i}(dv)  \leq  \frac{P[\zeta_{i} > x]}{P[Z_{i} > x]}
\leq  \frac{E(\zeta_{i}^{\alpha})}{L_{1}(x)},$$ which is bounded above by a constant multiple of $E[\zeta_{i}^{\alpha}]$, the constant being independent of $i$.
For the integral over the range $(1, x]$, for all sufficiently large $x$ uniformly in $i$, we have
$$\int_{(1,x]} \frac{P[Z_{i} > x/v]}{P[Z_{i} > x]}G_{i}(dv) \leq \sup_{1 \leq y \leq x}\frac{L_{1}(y)}{L_{1}(x)}\int_{[1,x)}v^{\alpha}G_{i}(dv),$$
which is once again bounded above by a constant multiple of $E[\zeta_{i}^{\alpha}]$, the constant free of $i$.
\end{proof}

\begin{lem} \label{DZ2:infinite sum}
Assume that $\{(X_{i}, Y_{i}), i \geq 1\}$ are i.i.d.\ random vectors with the generic random vector $(X, Y)$ following a bivariate Sarmanov distribution, satisfying \eqref{limit of phi} and \eqref{generic condition}. Also (\ref{DZ2}) holds and $E(Y^{\alpha})<1$. Further
$C_{i} = \sup_{x \geq 1} {P[\zeta_{i} > x]}/{\overline{F}(x)}$ satisfies
$\sum_{i=2}^{\infty}C_{i} < \infty \quad \text{when } \alpha < 1 \quad \text{and} \quad \sum_{i=2}^{\infty}C_{i}^{\frac{1}{\alpha + \varepsilon}} < \infty \quad \text{when } \alpha \geq 1$
for some $\varepsilon > 0$. Then, for all sufficiently large $x$ uniformly in $i$, and some constant $\eta$ independent of $i$,
\begin{equation} \label{DZ2:second}
\int_{(1, \infty)}\frac{P[Z_{i} > x/v]}{P[Z_{i} > x]} G_{i}(dv) < \eta C_{i} + E[\zeta_{i}^{\alpha}].
\end{equation}
\end{lem}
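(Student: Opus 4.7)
The strategy is to exploit the subexponential-density structure provided by (\ref{DZ2}). Theorem~\ref{final product thm} gives $\overline{H}(x) := P[Z_i > x] = x^{-\alpha} L_1(x)$ with $L_1(x) \sim c\, L(x)$, where $c := E[{Y^{*}_{\theta}}^{\alpha}] > 0$ is independent of $i$. Consequently $L_1$ inherits the type (\ref{(iii)}) or (\ref{(iv)}) form of Lemma~\ref{slowly varying categories}, and $\tilde{L}_1(t) := L_1(e^{t}) \in \mathcal{S}_d$; equivalently, the law of $\log Z_i$ belongs to the class $\mathcal{S}(\alpha)$, by a standard density-to-tail closure statement for subexponential densities. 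The workhorse is the uniform Potter-type bound available in $\mathcal{S}(\alpha)$: for any $\varepsilon > 0$ there exists $t_0$, depending only on $\varepsilon$ and the common law of $Z_i$ (hence independent of $i$), such that
\[
\frac{\overline{H}(x/v)}{\overline{H}(x)} \leq (1+\varepsilon)\, v^{\alpha} \quad \text{whenever } v \geq 1 \text{ and } x/v \geq e^{t_0}.
\]

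For $x \geq e^{2t_0}$ I would split $\int_{(1,\infty)} = \int_{(1,\, xe^{-t_0}]} + \int_{(xe^{-t_0},\,\infty)}$. On the first piece the Potter bound applied pointwise yields
\[
\int_{(1,\, xe^{-t_0}]} \frac{\overline{H}(x/v)}{\overline{H}(x)}\, G_i(dv) \leq (1+\varepsilon) \int_{(1,\infty)} v^{\alpha}\, G_i(dv) \leq (1+\varepsilon)\, E[\zeta_i^{\alpha}].
\]
On the second piece, $\overline{H}(x/v) \leq 1$ throughout the integration range, so the integral is at most $P[\zeta_i > xe^{-t_0}]/\overline{H}(x) \leq C_i\, \overline{F}(xe^{-t_0})/\overline{H}(x)$; using $\overline{H} \sim c\overline{F}$ together with slow variation of $L$ one has $\overline{F}(xe^{-t_0})/\overline{H}(x) \rightarrow e^{\alpha t_0}/c$, so this piece is bounded by $\eta C_i$ for all sufficiently large $x$ uniformly in $i$, with $\eta := 2 e^{\alpha t_0}/c$ independent of $i$. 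Combining the two pieces and choosing $\varepsilon$ small gives the claimed bound; the residual $(1+\varepsilon)$ factor fits inside the strict inequality in the statement of the lemma.

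\textbf{Main obstacle.} The delicate step is establishing the uniform Potter-type bound at the subexponential-density level, with constants depending only on the common law of $Z_i$. The pointwise convergence $\overline{H}(x/v)/\overline{H}(x) \rightarrow v^{\alpha}$ is immediate from slow variation of $L_1$, but the uniformity required for $v$ potentially growing with $x$ relies on the full force of $\tilde L_1 \in \mathcal{S}_d$ and the $\mathcal{S}(\alpha)$ closure results (Embrechts-Goldie, Kl\"uppelberg); these must be carefully invoked or re-derived in the specific type (iii)/(iv) setting. Once the uniform bound is in hand the rest reduces to the bookkeeping above, and the uniformity in $i$ follows automatically since all $Z_i$ share the same distribution $H$.
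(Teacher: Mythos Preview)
Your proposal rests on a bound that is not available under~(\ref{DZ2}). You claim that membership in $\mathcal{S}(\alpha)$ yields a uniform Potter-type estimate
\[
\frac{\overline{H}(x/v)}{\overline{H}(x)} \leq (1+\varepsilon)\, v^{\alpha}
\quad\text{for all } v\ge 1,\ x/v\ge e^{t_0}.
\]
Writing $\overline H(x)=x^{-\alpha}L_1(x)$, this is exactly the statement $L_1(y)\le(1+\varepsilon)L_1(x)$ for all $e^{t_0}\le y\le x$, i.e.\ essentially condition~(\ref{DZ1}) itself. Condition~(\ref{DZ2}) is designed precisely for situations where this fails: if $L$ is of type~(\ref{(iii)}), say $L(x)=c(x)P[U>\log x]$ with $P[U>t]=t^{-2}$, then $L_1(e^{t_0})/L_1(x)\sim (\log x)^2/t_0^2\to\infty$, while $L(e^x)=c(e^x)x^{-2}\in\mathcal S_d$. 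The $\mathcal{S}(\alpha)$ property gives $P[T>s-u]/P[T>s]\to e^{\alpha u}$ only locally uniformly in $u$, not uniformly over $u\in[0,s-t_0]$; no Embrechts--Goldie or Kl\"uppelberg closure result upgrades this to the global Potter bound you need. Consequently the integral over $(1,xe^{-t_0}]$ cannot be controlled in the way you propose.

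The paper's route avoids any pointwise comparison of $\overline H(x/v)$ with $v^\alpha\overline H(x)$. Instead it splits at $v=x$, and on $(1,x]$ integrates by parts so that the integrand becomes $P[\zeta_i>v]\le C_i\overline F(v)\le C_i\gamma\,P[Z_i>v]$, against $d_vP[Z_i>x/v]$. After the substitution $u=\log v$ the resulting integral is
\[
\int_{(0,\log x]}\frac{P[\log Z_i>u]}{P[\log Z_i>\log x]}\,d_uP[\log Z_i>\log x-u],
\]
which is the self-convolution expression controlled by the second defining property of $\mathcal S(\alpha)$, namely $P[T+T'>s]/P[T>s]\to 2E[e^{\alpha T}]$. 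That is the piece of the $\mathcal S(\alpha)$ structure actually used; the pointwise ratio bound you invoke is neither needed nor true.
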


\begin{proof}
We split the integral in \eqref{DZ2:second} over $(1, x]$ and $(x, \infty)$. The integral over $(x, \infty)$ is bounded as:
\begin{equation} \label{thirdpart DZ2 bound}
\int_{(x,\infty)}\frac{P[Z_{i} > x/v]}{P[Z_{i} > x]}G_{i}(dv) \leq \frac{P[\zeta_{i} > x]}{P[Z_{i} > x]} \leq C_{i} \frac{\overline{F}(x)}{P[Z_{i} > x]}.
\end{equation}
Since, from Theorem \ref{final product thm}, we know that $\overline{F}(x)/P[Z_{i} > x] \rightarrow \{E[{Y^{*}_{\theta}}^{\alpha}]\}^{-1}$, hence the right hand side of \eqref{thirdpart DZ2 bound} is bounded by a constant.

We perform integration by parts on the integral over the interval $(1, x]$ to get
\begin{equation*}
\int_{(1,x]}\frac{P[Z_{i} > x/v]}{P[Z_{i} > x]} G_{i}(dv) \leq \overline{G_{i}}(1) + \int_{(1,x]}\frac{P[\zeta_{i} > v]}{P[Z_{i} > x]}d_{v}P[Z_{i} > x/v].
\end{equation*}
The first term gets bounded by $E(\zeta_{i}^{\alpha})$ by Markov inequality. Substituting $u = \log v$ the second term is bounded by
$$C_{i} \gamma \int_{(0,\log x]}\frac{P[\log Z_{i} > u]}{P[\log Z_{i} > \log x]}d_{u}P[\log Z_{i} > \log x - u].$$

Recall that $\overline{H}(x) = P[X_{i} Y_{i} > x] = x^{-\alpha} L_{1}(x)$, where $L_{1}$ has the same representation out of (\ref{(iii)}) or (\ref{(iv)}) of Lemma \ref{slowly varying categories} as $L$. Also, $L_{1}(e^{x}) \in \mathcal{S}_{d}$. From Theorem~2.1 of \cite{kluppelberg:1989} this implies that $(\log Z_{t})^{+} \in \mathcal{S}(\alpha)$.

Therefore, there exists some $x_{2}$ large enough, independent of $t$, such that for all $x \geq x_{2}$
$$ \int_{(0,\log x]}\frac{P[\log Z_{i} > u]}{P[\log Z_{i} > \log x]}d_{u}P[\log Z_{i} > \log x - u] \leq 3E[\exp (\alpha(\log Z_{i})^{+})] \leq 3(E[Z_{i}^{\alpha}]+ 1).$$ Hence the result follows.
\end{proof}

\begin{lem} \label{DZ3:infinite sum}
Assume that $\{(X_{i}, Y_{i}), i \geq 1\}$ are i.i.d.\ random vectors with the generic random vector $(X, Y)$ following a bivariate Sarmanov distribution, satisfying \eqref{limit of phi} and \eqref{generic condition}. Also the condition (\ref{DZ3}) holds and $E(Y^{\alpha})<1$. We further have
$$\sup_{x \geq 1} \frac{P[\zeta_{i} > x]}{x^{-\alpha}P[U > \log x]} = C_{i},$$
where $\sum_{i=2}^{\infty}C_{i} < \infty \quad \text{when } \alpha < 1 \quad \text{and} \quad \sum_{i=2}^{\infty}C_{i}^{\frac{1}{\alpha + \varepsilon}} < \infty \quad \text{when } \alpha \geq 1$ for some $\varepsilon > 0$. Then we have, for all sufficiently large $x$ uniformly in $i$, and for two constants $\gamma$, $\eta$ independent of $i$,
\begin{equation} \label{DZ3:second}
\int_{(1, \infty)}\frac{P[Z_{i} > x/v]}{P[Z_{i} > x]}G_{i}(dv)  \leq \gamma E[\zeta_{i}^{\alpha}] + \eta C_{i}.
\end{equation}
\end{lem}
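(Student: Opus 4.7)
My proof plan parallels the structure of Lemma \ref{DZ2:infinite sum}, with the DZ3 bound $P[\zeta_i > v] \leq C_i v^{-\alpha} P[U > \log v]$ replacing the DZ2 bound $P[\zeta_i > v] \leq C_i \overline{F}(v)$. I split the integral in \eqref{DZ3:second} over $(x, \infty)$ and $(1, x]$. On $(x, \infty)$, bounding $P[Z_i > x/v] \leq 1$ reduces the task to estimating $P[\zeta_i > x]/P[Z_i > x]$. Writing $P[Z_i > x] = x^{-\alpha} L_1(x)$, the DZ3 hypothesis gives $P[\zeta_i > x]/P[Z_i > x] \leq C_i P[U > \log x]/L_1(x)$. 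Since by Theorem \ref{final product thm}, $L_1(x) \sim E[{Y^{*}_{\theta}}^{\alpha}] L(x)$ with $L$ of type (\ref{(iii)}) or (\ref{(iv)}) built on the same $U$, this ratio is uniformly bounded (converging to a positive constant in type (\ref{(iii)}) and to zero in type (\ref{(iv)})), yielding an $O(C_i)$ contribution.

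For the integral over $(1, x]$, I integrate by parts against $G_i$. The boundary at $v = 1$ produces $\overline{G_i}(1) \leq E[\zeta_i^\alpha]$ by Markov's inequality, while the boundary at $v = x$ has a sign that permits us to discard it. The residual $\int_1^x \overline{G_i}(v) \, d_v [P[Z_i > x/v]/P[Z_i > x]]$ is controlled via DZ3 and the substitution $u = \log v$, converting it into
\[
C_i \int_{(0, \log x]} \frac{e^{-\alpha u} P[U > u]}{P[Z_i > x]} \, d_u P[\log Z_i > \log x - u].
\]
Using the type (\ref{(iii)}) or (\ref{(iv)}) representation of $L_1$, I have $e^{-\alpha u} P[U > u] = P[\log Z_i > u]/c_1(e^u)$ in type (\ref{(iii)}) and $e^{-\alpha u} P[U > u] = P[\log Z_i > u] P[V > u]/c_1(e^u)$ in type (\ref{(iv)}); both are bounded by a constant times $P[\log Z_i > u]$ uniformly in $u$.

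This reduces the estimate to controlling $\int_{(0, \log x]} (P[\log Z_i > u]/P[\log Z_i > \log x]) \, d_u P[\log Z_i > \log x - u]$, exactly the quantity handled at the end of the proof of Lemma \ref{DZ2:infinite sum}. To close, I need $(\log Z_i)^+ \in \mathcal{S}(\alpha)$, which yields that this integral is at most $3 E[\exp(\alpha(\log Z_i)^+)] \leq 3(E[Z_i^\alpha] + 1)$, uniformly in $i$ (since $E[Z_i^\alpha] = E[X^\alpha] E[Y^\alpha]$ is finite under DZ3). Collecting pieces gives the bound $\gamma E[\zeta_i^\alpha] + \eta C_i$ as required.

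The main obstacle is this last step: whereas under DZ2 the Kluppelberg result (Theorem~2.1 of \cite{kluppelberg:1989}) applies directly via the hypothesis $L_1(e^x) \in \mathcal{S}_d$, here I have only $U \in \mathcal{S}^*$ and must deduce $L_1(e^x) \in \mathcal{S}_d$ from the representation of $L_1$. This is immediate in type (\ref{(iii)}) since $L_1(e^x) \sim \text{const} \cdot P[U > x]$ and $\mathcal{S}_d$ is preserved under constant multiples and asymptotic equivalence. In type (\ref{(iv)}) the extra factor $P[V > x]^{-1}$ complicates the argument and requires a careful invocation of closure properties for subexponential densities, or alternatively a direct re-derivation of the Kluppelberg conclusion for $L_1(e^x)$ using the asymptotic comparison with $P[U > x]$ alone.
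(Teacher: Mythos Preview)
Your decomposition into $(x,\infty)$ and $(1,x]$, the treatment of $(x,\infty)$, and the integration by parts on $(1,x]$ all match the paper. The divergence begins after the integration by parts. The paper does \emph{not} convert back to $\log Z_i$ and invoke $\mathcal{S}(\alpha)$. Instead, it first uses the representation of $L_1$ to bound
\[
\frac{P[Z_i>x/v]}{P[Z_i>x]} \;\le\; A\, v^{\alpha}\,\frac{P[U>\log x-\log v]}{P[U>\log x]},
\]
which holds in both types (\ref{(iii)}) and (\ref{(iv)}) because the extra factor $P[V>\log x]/P[V>\log x-\log v]$ is at most $1$. It then integrates $v^{\alpha}\,\frac{P[U>\log x-\log v]}{P[U>\log x]}$ by parts against $G_i(dv)$, producing two terms: one with $\alpha v^{\alpha-1}$, bounded via $C_i$ and the defining $\mathcal{S}^*$ integral $\int_0^\infty P[U>u]\,du<\infty$; and one Stieltjes term in $d_vP[U>\log x-\log v]$, bounded via $C_i$ and $P[U+U'> \log x]/P[U>\log x]\to 2$. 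Everything is phrased purely in terms of $U$, so $U\in\mathcal{S}^*$ suffices directly.

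Your route has a genuine gap in type (\ref{(iv)}). You need $(\log Z_i)^+\in\mathcal{S}(\alpha)$, which via Kl\"uppelberg requires $L_1(e^{x})\in\mathcal{S}_d$; but membership in $\mathcal{S}_d$ demands $\int_0^\infty L_1(e^{u})\,du<\infty$. In type (\ref{(iv)}) one has $L_1(e^{u})\sim c_1\,P[U>u]/P[V>u]$, and since $P[V>u]\to 0$ this integral can diverge. So the closure property you hope for is simply not available in general, and the ``careful invocation'' you allude to cannot succeed. A related slip: $E[Z_i^{\alpha}]\neq E[X^{\alpha}]E[Y^{\alpha}]$ because $(X_i,Y_i)$ are Sarmanov-dependent, and in type (\ref{(iv)}) $E[X^{\alpha}]$ itself may be infinite. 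The paper's device of stripping off the $V$-factor by monotonicity and working directly with $U$ is exactly what sidesteps all of this.
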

\begin{proof}
We split the integral in \eqref{DZ3:second} over two intervals, $(1, x]$ and $(x, \infty)$. For the integral over $(x, \infty)$, we have
$$\int_{(x,\infty)}\frac{P[Z_{i} >x/v]}{P[Z_{i} > x]}G_{i}(dv)  \leq \frac{P[\zeta_{i} > x]}{P[Z_{i} > x]}.$$
Now $\overline{H}(x) = P[X_{i} Y_{i} > x] = x^{-\alpha} L_{1}(x)$. Accordingly as $L$ is of the form (\ref{(iii)}) or (\ref{(iv)}) of Lemma \ref{slowly varying categories}, $L_{1}$ will have an analogous form with $c(x)$ replaced by $c_{1}(x)$. Thus we have, for all sufficiently large $x$ uniformly in $i$, $$\frac{P[\zeta_{i} > x]}{P[Z_{i} > x]} \leq \frac{P[\zeta_{i} > x]}{c_{1}(x)x^{-\alpha}P[U > \log x]} \leq \frac{2C_{i}}{c_{1}}.$$

For the integral over $(1, x]$, when $L$ is of the form (\ref{(iii)}) or (\ref{(iv)}),
\begin{align} \label{big integral}
\int_{(1,x]}\frac{P[Z_{i} > x/v]}{P[Z_{i} > x]}G_{i}(dv) \leq & \sup_{v \in [1,x]}\frac{c_{1}(x/v)}{c_{1}(x)}\int_{(1,x]}\frac{P[U > \log x - \log v]}{P[U > \log x]}v^{\alpha}G_{i}(dv) \nonumber\\
\leq & A \int_{(1,x]}\frac{P[U > \log x - \log v]}{P[U > \log x]}v^{\alpha}G_{i}(dv),
\end{align}
since $\lim_{x \rightarrow \infty} c_{1}(x) = c_{1}$ and hence $\sup_{v \in [1,x]}c_{1}(x/v)/c_{1}(x) < \infty$. We bound the integral  in \eqref{big integral} by using integration by parts, which gives the bound
$$\overline{G_{i}}(1) + \alpha \int_{(1,x]} \frac{v^{\alpha -1}P[U > \log x - \log v]}{P[U > \log x]}\overline{G_{i}}(v)dv + \int_{(1,x]}\frac{\overline{G_{i}}(v)v^{\alpha}}{P[U > \log x]}d_{v}P[U > \log x - \log v].$$

The first term is bounded by $E(\zeta_{i}^{\alpha})$ by Markov inequality. The second term can be dealt with as follows:
\begin{align}
\alpha \int_{(1,x]} \frac{v^{\alpha -1}P[U > \log x - \log v]}{P[U > \log x]}\overline{G_{i}}(v)dv & \leq \alpha C_{i}\int_{(1,x]} \frac{P[U > \log x - \log v]P[U > \log v]}{P[U > \log x]}\frac{1}{v}dv \nonumber\\
& < 3 \alpha C_{i} \int_{0}^{\infty}P[U>u]du \nonumber
\end{align}
for all sufficiently large $x$ uniformly in $i$. The last inequality follows from the substitution $w = \log v$ and noting that $U \in \mathcal{S}^{*}$ implies $U$ is subexponential. For the third term, we have, again for all sufficiently large $x$ uniformly in $i$,
\begin{align}
\int_{(1,x]}\frac{\overline{G_{i}}(v)v^{\alpha}}{P[U > \log x]}d_{v}P[U > \log x - \log v]
& \leq C_{i} \int_{(1,x]} \frac{P[U > \log v]}{P[U > \log x]}d_{v}P[U > \log x - \log v] \nonumber\\
& = C_{i} \frac{P[U+U' > \log x]}{P[U > \log x]} < 3C_{i}, \nonumber
\end{align}
where in the last step we use subexponentiality of $U$ and $U'$. Combining everything, the result follows.
\end{proof}

\begin{lem} \label{DZ4:infinite sum}
Assume that $\{(X_{i}, Y_{i}), i \geq 1\}$ are i.i.d.\ random vectors with the generic random vector $(X, Y)$ following a bivariate Sarmanov distribution, satisfying \eqref{limit of phi} and \eqref{generic condition}. The condition (\ref{DZ4}) also holds and $E(Y^{\alpha})<1$.
We also have $$\sup_{x \geq 1}\frac{P[\zeta_{i} > x]}{\overline{F}(x)}m(x) = C_{i} \in (0, \infty)$$ with
$\sum_{i=2}^{\infty}C_{i} < \infty \quad \text{when } \alpha < 1 \quad \text{and} \quad \sum_{i=2}^{\infty}C_{i}^{\frac{1}{\alpha + \varepsilon}} < \infty \quad \text{when } \alpha \geq 1$
for some $\varepsilon > 0$. Then for all sufficiently large $x$ uniformly in $i$, and constants $\gamma, \eta$ independent of $i$,
\begin{equation} \label{DZ4:second}
\int_{(1, \infty)}\frac{P[Z_{i} > x/v]}{P[Z_{i} > x]}G_{i}(dv) \leq \gamma E[\zeta_{i}^{\alpha}] + \eta C_{i}.
\end{equation}
\end{lem}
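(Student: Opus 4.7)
The plan is to imitate the three-region split used in Lemmas \ref{DZ1:infinite sum}--\ref{DZ3:infinite sum}, calibrated to the (\ref{DZ4}) structure. I partition the integration range $(1,\infty)$ at $\sqrt{x}$ and $x$, treating $(1,\sqrt{x}]$ by the (\ref{DZ4}) sup condition, the outer range $(x,\infty)$ by the trivial bound $P[Z_i > x/v] \le 1$, and the delicate middle range $(\sqrt{x}, x]$ via integration by parts combined with the change of variables $u = x/v$.

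For $v \in (1,\sqrt{x}]$ we have $x/v \in [\sqrt{x}, x)$, and the (\ref{DZ4}) sup condition applied to $L_1(y) = y^\alpha \overline{H}(y)$ (which inherits the condition from $L$ because $L_1 \sim E[{Y_\theta^*}^\alpha] L$ by Theorem \ref{final product thm}) yields $L_1(x/v)/L_1(x) \le K$ uniformly for $x$ large. Writing $P[Z_i > x/v]/P[Z_i > x] = v^\alpha L_1(x/v)/L_1(x)$, the inner-range integral is bounded by $K E[\zeta_i^\alpha]$, supplying the $\gamma E[\zeta_i^\alpha]$ contribution. For $v \in (x,\infty)$, bounding the integrand by $1/P[Z_i > x]$ yields at most $\overline{G_i}(x)/P[Z_i > x] \le C_i \overline{F}(x)/(m(x) P[Z_i > x])$, which is a constant multiple of $C_i$ uniformly in $i$, since $P[Z_i > x] \sim E[{Y_\theta^*}^\alpha] \overline{F}(x)$ and $m(x)\to\infty$.

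The main obstacle is the middle range $(\sqrt{x}, x]$, since (\ref{DZ4}) controls $L$ only on $[\sqrt{x}, x]$ and cannot be applied pointwise when $x/v < \sqrt{x}$. I integrate by parts to obtain
$$\int_{(\sqrt{x},x]} \frac{P[Z_i > x/v]}{P[Z_i>x]}\, G_i(dv) \le \frac{P[Z_i > \sqrt{x}]\,\overline{G_i}(\sqrt{x})}{P[Z_i>x]} + \int_{(\sqrt{x},x]} \overline{G_i}(v)\, d_v\frac{P[Z_i > x/v]}{P[Z_i>x]}.$$
The boundary term reduces to $K' C_i L(\sqrt{x})/m(\sqrt{x})$ (up to asymptotic constants), which vanishes because $m(y)/L(y) \to \infty$ whenever $E[X^\alpha] = \infty$: by Karamata, $m(y)\sim \alpha\int_0^y L(t)/t\,dt$, and the uniform convergence theorem for slowly varying $L$ forces the integral $\int_0^y L(t)/t\,dt$ to grow faster than $L(y)$.

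For the remaining integral, the substitution $u = x/v$ rewrites it as $\overline{H}(x)^{-1} \int_{[1,\sqrt{x})} \overline{G_i}(x/u)\, H(du)$. Applying $\overline{G_i}(x/u) \le C_i \overline{F}(x/u)/m(x/u)$, the (\ref{DZ4}) sup condition in the form $\overline{F}(x/u) \le K u^\alpha \overline{F}(x)$ for $x/u \in [\sqrt{x}, x]$, and $m(x/u) \ge m(\sqrt{x})$, the integral is bounded by $\mathrm{const}\cdot C_i\cdot m_H(\sqrt{x})/m(\sqrt{x})$, where $m_H(y) = \int_0^y v^\alpha H(dv)$. Adapting the moment comparison from the proof of Lemma \ref{induction for DZ4}, writing $m_H(y) = \alpha \int_0^y v^{\alpha-1}\overline{H}(v)\,dv - y^\alpha \overline{H}(y)$ and similarly for $m$, and using $y^\alpha \overline{F}(y)/m(y) = L(y)/m(y) \to 0$, one gets $\limsup m_H(y)/m(y) \le \sup_v \overline{H}(v)/\overline{F}(v) < \infty$. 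Hence the middle range contributes at most a constant multiple of $C_i$, and summing the three regional bounds delivers the claim with constants $\gamma,\eta$ independent of $i$ and for all sufficiently large $x$.
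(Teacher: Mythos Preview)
Your argument is correct and follows the same architecture as the paper's: the identical three-region split at $\sqrt{x}$ and $x$, the same bounds on $(1,\sqrt{x}]$ and $(x,\infty)$, and integration by parts on $(\sqrt{x},x]$. Two execution details differ. For the boundary term at $\sqrt{x}$, the paper simply applies Markov's inequality $\overline{G_i}(\sqrt{x}) \le x^{-\alpha/2} E[\zeta_i^{\alpha}]$ to obtain $x^{\alpha/2}\tfrac{L_1(\sqrt{x})}{L_1(x)}\overline{G_i}(\sqrt{x}) \le K\,E[\zeta_i^{\alpha}]$ directly, which is shorter than your route via the de~Haan--type fact $L(y)/m(y)\to 0$. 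For the integral remaining after integration by parts, the paper passes from $d_v\bigl(v^{\alpha}L_1(x/v)\bigr)$ to $d_v P[X_1 > x/v]$, substitutes $y=x/v$, and obtains $\int_1^{\sqrt{x}} y^{\alpha}F(dy)\le m(\sqrt{x})$, which cancels against the $m(\sqrt{x})$ in the denominator; you instead keep the distribution $H$ throughout, arrive at $m_H(\sqrt{x})/m(\sqrt{x})$, and invoke the moment comparison from Lemma~\ref{induction for DZ4}. Your treatment of this step is in fact the more careful one, since the paper's replacement of $L_1$ by $L$ inside the Stieltjes differential is written as an equality that is not literally valid.
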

\begin{proof}
We split the integral in \eqref{DZ4:second} over two intervals, $(1, x]$ and $(x, \infty)$. We bound the integral over $(x, \infty)$ as follows:
$$\int_{(x,\infty)}\frac{P[Z_{i} > x/v]}{P[Z_{i} > x]}G_{i}(dv) \leq \frac{P[\zeta_{i} > x]}{\overline{F}(x)} \cdot \frac{\overline{F}(x)}{P[Z_{i} > x]}.$$

Since $\overline{F}(x)/P[Z_{i} > x]$ converges and hence bounded, and $m(x) \rightarrow \infty$,  leading to $P[\zeta_{i} > x]/\overline{F}(x) \leq C_{i}$, for all sufficiently large $x$ uniformly in $i$, thus we have, again for all sufficiently large $x$ uniformly in $i$, $\int_{(x,\infty)}\frac{P[Z_{i} > x/v]}{P[Z_{i} > x]}G_{i}(dv)$ is bounded above by a constant multiple of $C_{i}$, the constant independent of $i$.

We now consider the integral over the interval $(1, x]$ and further split it into two sub-intervals: $(1, \sqrt{x}]$ and $(\sqrt{x}, x]$ and bound them separately.

We have, for all sufficiently large $x$ uniformly in $i$,
\begin{equation*}
\int_{(1, \sqrt{x}]}\frac{P[Z_{i} > x/v]}{P[Z_{i} > x]}G_{i}(dv) \leq \sup_{u \in [\sqrt{x}, x)}\frac{L_{1}(u)}{L_{1}(x)} \int_{(1,\sqrt{x}]}v^{\alpha}G_{t}(dv)
\end{equation*}
which is bounded above by a constant multiple of $E[\zeta_{i}^{\alpha}]$, the constant free of $i$. For the integral over the subinterval $(\sqrt{x}, x]$, we integrate by parts to obtain
\begin{align} \label{split DZ4}
\int_{(\sqrt{x}, x]}\frac{P[Z_{i}>x/v]}{P[Z_{i}>x]}G_{i}(dv) &= -x^{\alpha}\frac{L_{1}(1)}{L_{1}(x)}\overline{G_{i}}(x) + x^{\alpha/2}\frac{L_{1}(\sqrt{x})}{L_{1}(x)}\overline{G_{i}}(\sqrt{x}) + \int_{(\sqrt{x}, x]}\frac{\overline{G_{i}}(v)}{L_{1}(x)}d_{v}(v^{\alpha}L_{1}(x/v)) \nonumber\\
&\leq x^{\alpha/2}\frac{L_{1}(\sqrt{x})}{L_{1}(x)}\overline{G_{i}}(\sqrt{x}) + \int_{(\sqrt{x}, x]}\frac{\overline{G_{i}}(v)}{L_{1}(x)}d_{v}(v^{\alpha}L_{1}(x/v)).
\end{align}
By Markov's inequality, for all sufficiently large $x$ uniformly in $i$, we have
$$x^{\alpha/2}\frac{L_{1}(\sqrt{x})}{L_{1}(x)}\overline{G_{i}}(\sqrt{x}) \leq \sup_{\sqrt{x} \leq y \leq \sqrt{x}}\frac{L_{1}(y)}{L_{1}(x)}E(\zeta_{i}^{\alpha}),$$
which is again bounded above by a constant multiple of $E[\zeta_{i}^{\alpha}]$, the constant free of $i$.

We now bound the second term of \eqref{split DZ4} as follows. For all sufficiently large $x$ uniformly in $i$,
\begin{align} \label{final term in DZ4}
\int_{\sqrt{x}}^{x}\frac{\overline{G_{i}}(v)}{L_{1}(x)}d_{v}(v^{\alpha}L_{1}(x/v)) &= \int_{\sqrt{x}}^{x}\frac{\overline{G_{i}}(v)}{P[X_{1} >x]}d_{v}(P[X_{1} > x/v]) \nonumber\\
&\leq \frac{C_{i}}{m(\sqrt{x})} \int_{\sqrt{x}}^{x}\frac{P[X_{1} > v]}{P[X_{1} > x]}d_{v}(P[X_{1} > x/v]) \quad \text{as } m \text{ is increasing}\nonumber\\
&\leq \frac{C_{i}}{m(\sqrt{x})} \sup_{\sqrt{x} \leq y \leq x} \frac{L(y)}{L(x)}\int_{\sqrt{x}}^{x}(\frac{x}{v})^{\alpha} d_{v}(P[X_{1} > x/v]) \nonumber\\
&= \frac{C_{i}}{m(\sqrt{x})} \sup_{\sqrt{x} \leq y \leq x} \frac{L(y)}{L(x)} \int_{1}^{\sqrt{x}}y^{\alpha}d_{y}(P[X_{1} \leq y])\nonumber,
\end{align}
which is bounded above by a multiple of $C_{i}$, the multiple free of $i$. Combining, the result follows.
\end{proof}

We summarize the consequence of all the previous results from this section in our final theorem.
\begin{thm} \label{infinite sum}
Assume that $\{(X_{i}, Y_{i}), i \geq 1\}$ are i.i.d.\ random vectors with the generic random vector $(X, Y)$ following a bivariate Sarmanov distribution, as defined in Definition \ref{sarmanov definition}, with $X \in RV_{-\alpha}$. Let $E[Y^{\alpha}] < 1, \overline{G}(x) = o(\overline{F}(x))$ and $\lim_{x \rightarrow \infty} \phi_{1}(x) = d_{1}$. Assume that one of the four (DZ) conditions holds. If one of (\ref{DZ2}), (\ref{DZ3}) and (\ref{DZ4}) is satisfied, then define
\begin{equation*}
C_{i} =
\begin{cases}
\sup_x \frac{P[\zeta_{i} > x]}{P[X_{1}>x]}, &\text{when~(\ref{DZ2}) holds,}\\[1ex]
\sup_x \frac{P[\zeta_{i} > x]}{x^{-\alpha}P[U>\log x]}, &\text{when~(\ref{DZ3}) holds,}\\[1ex]
\sup_x \frac{P[\zeta_{i} > x]}{P[X_{1}>x]} m(x), &\text{when~(\ref{DZ4}) holds,}
\end{cases}
\end{equation*}
and further assume that $$\sum_{i=2}^{\infty}C_{i} < \infty \quad \text{when } \alpha < 1 \quad \text{and} \quad \sum_{i=2}^{\infty}C_{i}^{\frac{1}{\alpha + *\varepsilon}} < \infty \quad \text{when } \alpha \geq 1$$ for some $\varepsilon > 0$. Then
$$P\left[\sup_{n \geq 1} \sum_{i=1}^{n} X_{i} \prod_{j=1}^{i} Y_{j} > x \right] \sim \frac{E[Y^{\alpha}] + \theta d_{1} E[\phi_{2}(Y) Y^{\alpha}]}{1 - E[Y^{\alpha}]} P[X_{1} > x].$$
\end{thm}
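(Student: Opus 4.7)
\textbf{Proof plan for Theorem \ref{infinite sum}.} The plan is to establish matching lower and upper bounds for $\Psi(x)/\overline{F}(x)$. The lower bound is essentially free: for each fixed $m$, Theorem \ref{finite sum final thm} (or Theorem~4.1 of \cite{yang:wang:2013} in the (\ref{DZ1}) case) gives
$$\liminf_{x \to \infty}\frac{\Psi(x)}{\overline{F}(x)} \geq \lim_{x \to \infty}\frac{\Psi(x,m)}{\overline{F}(x)} = \frac{1-E[Y^\alpha]^m}{1-E[Y^\alpha]}\,E[{Y^{*}_{\theta}}^{\alpha}],$$
and letting $m \to \infty$ (using $E[Y^\alpha]<1$) yields the desired lower bound. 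The entire work therefore concentrates on the matching upper bound.

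For the upper bound, I would use the decomposition \eqref{split sum} with a truncation parameter $m$ and a slack $\delta \in (0,1)$. The ``head'' term $\Psi((1-\delta)x, m)$ is handled by Theorem \ref{finite sum final thm} combined with regular variation of $\overline{F}$, producing a $\limsup$ bounded by $\frac{E[{Y^{*}_{\theta}}^{\alpha}]}{1-E[Y^\alpha]}(1-\delta)^{-\alpha}$. The ``tail'' term $P[\sum_{i=m+1}^{\infty} Z_i \zeta_i > \delta x]$ must be shown to be $o(\overline{F}(x))$ after $m$ is chosen large, uniformly for all sufficiently large $x$. Split this tail term as in \eqref{second term} into a union-bound piece $\sum_{i>m} P[Z_i \zeta_i > \delta x]$ and a truncated-sum piece. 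The truncated piece is bounded, following the strategy in \cite{yang:wang:2013}, by Markov's inequality when $\alpha<1$ and Minkowski's inequality (with the extra exponent $\alpha+\varepsilon$) when $\alpha \geq 1$, together with Karamata's theorem. Both estimates reduce to controlling the ratio $P[Z_i\zeta_i>x]/\overline{F}(x)$ by a summable (respectively $\tfrac{1}{\alpha+\varepsilon}$-summable) sequence $B_i$, uniformly in $x$ past some threshold.

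The crux of the argument is therefore to produce such $B_i$. Writing the integral decomposition
$$\frac{P[Z_i\zeta_i>x]}{P[Z_i>x]} = \int_{(0,1]} \frac{P[Z_i>x/v]}{P[Z_i>x]} G_i(dv) + \int_{(1,\infty)} \frac{P[Z_i>x/v]}{P[Z_i>x]} G_i(dv),$$
the first integral is controlled, for all large $x$, by $2E[\zeta_i^\alpha]$ via the uniform convergence of the regularly varying ratio to $v^\alpha$ on the compact range $(0,1]$. The second integral is precisely what Lemmas \ref{DZ1:infinite sum}--\ref{DZ4:infinite sum} estimate for each of the four (DZ) conditions: in every case the bound has the form $\gamma E[\zeta_i^\alpha] + \eta C_i$ with constants $\gamma,\eta$ independent of $i$, where $C_i$ is the relevant quantity appearing in the hypothesis. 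Since $E[\zeta_i^\alpha] = E[Y^\alpha]^{i-1}$ decays geometrically under $E[Y^\alpha]<1$, the sequence $(E[\zeta_i^\alpha])$ is absolutely summable and (since $E[Y^\alpha]^{(i-1)/(\alpha+\varepsilon)}$ also decays geometrically) is $\tfrac{1}{\alpha+\varepsilon}$-summable as well. Combined with the hypothesized summability of $(C_i)$, this yields a valid choice $B_i = \text{const}\cdot(E[\zeta_i^\alpha]+C_i)$.

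Given such $B_i$, one first fixes $\varepsilon'>0$, picks $m$ large so that $\sum_{i>m} B_i$ (or its $\tfrac{1}{\alpha+\varepsilon}$ analog) is arbitrarily small, uses this to bound the tail term by $\varepsilon'\overline{F}(x)$ for $x$ past a threshold independent of $m$, then finally lets $\delta \to 0$ to match the lower bound. The main obstacle is precisely the uniformity in $i$ of the estimates in Lemmas \ref{DZ1:infinite sum}--\ref{DZ4:infinite sum}: one must check that the thresholds in $x$ for these inequalities can be chosen independently of $i$, which follows from the fact that $\overline{F}(x)/P[Z_i>x] \to E[{Y^{*}_{\theta}}^{\alpha}]^{-1}$ and that the slowly varying ratios involved depend only on the marginal $L_1$, which is a fixed constant multiple of $L$ in the limit. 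Assembling these pieces yields the stated asymptotic $\Psi(x) \sim \tfrac{E[Y^{*\alpha}_{\theta}]}{1-E[Y^\alpha]}\overline{F}(x)$.
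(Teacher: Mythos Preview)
Your proposal is correct and follows essentially the same route as the paper: the lower bound via Theorem~\ref{finite sum final thm} and $m\to\infty$, the upper bound via the head/tail split~\eqref{split sum}, the further decomposition~\eqref{second term} handled by Markov/Minkowski plus Karamata, and the reduction to a uniform-in-$i$ bound on $P[Z_i\zeta_i>x]/P[Z_i>x]$ whose $(1,\infty)$ part is supplied by Lemmas~\ref{DZ1:infinite sum}--\ref{DZ4:infinite sum}. Your explicit identification of $B_i$ as a constant multiple of $E[\zeta_i^\alpha]+C_i$, and your remark that the required uniformity in $i$ comes from the fact that only the fixed slowly varying function $L_1$ (and the convergent ratio $\overline F/\overline H$) enters the estimates, are exactly the points the paper relies on.
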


\section{Acknowledgement}
The first author acknowledges partial support by the project RARE-318984, a Marie Curie IRSES Fellowship within the 7th European Community Framework Programme. The work formed part of the Dissertation for Master of Statistics degree of the second author at Indian Statistical Institute, Kolkata, India.

\bibliographystyle{elsarticle-num}
\bibliography{mybibfile}

\end{document}